\newcommand{\mylabel}[2]{#2\def\@currentlabel{#2}\label{#1}}
\let\csname ver@amsthm.sty\endcsname\relax
\numberwithin{equation}{section}
\newtheorem{thm}{Theorem}[section]
\newtheorem{lemma}[thm]{Lemma}
\newtheorem{prop}[thm]{Proposition}
\newtheorem{Example}[thm]{Example}
\newtheorem{Remark}[thm]{Remark}
\crefname{thm}{Theorem}{Theorems}
\crefname{lemma}{Lemma}{Lemmas}
\crefname{cor}{Corollary}{Corollaries}
\crefname{prop}{Proposition}{Propositions}
\crefname{example}{Example}{Examples}
\crefname{remark}{Remark}{Remarks}
\newcommand{\emailhref}[1]{\email{\href{#1}{#1}}}
\newcommand{\RR}{\mathbb{R}}
\newcommand{\RRgt}{\RR_{>0}}
\newcommand{\Zgt}{{\mathbb{Z}}_{\geq 1}}
\newcommand{\Zgtwo}{{\mathbb{Z}}_{\geq 2}}
\newcommand{\Zge}{{\mathbb{Z}}_{\ge 0}}
\newcommand{\f}[1]{f_{m,a}(#1)} 
\title{On the maximum of the weighted binomial sum $(1+a)^{-r}\sum_{i=0}^{r}\binom{m}{i}a^{i}$}
\author[Seok Hyun Byun]{Seok Hyun Byun}\emailhref{sbyun@clemson.edu}
\address{School of Mathematical and Statistical Sciences, Clemson University, Clemson, SC, U.S.A.}
\thanks{S.B. is supported by the AMS-Simons Travel Grant.}
\author[Svetlana Poznanovi\'c]{Svetlana Poznanovi\'c}\emailhref{spoznan@clemson.edu}
\address{School of Mathematical and Statistical Sciences, Clemson University, Clemson, SC, U.S.A.}
\thanks{S.P. was supported by NSF DMS 1815832.}
\begin{document}
\maketitle

\begin{abstract}
Recently, Glasby and Paseman considered the following sequence of binomial sums $\{2^{-r}\sum_{i=0}^{r}\binom{m}{i}\}_{r=0}
^{m}$ and showed that this sequence is unimodal and attains its maximum value at $r=\lfloor\frac{m}{3}\rfloor+1$ for $m\in\Zge\setminus\{0,3,6,9,12\}$. They also analyzed the asymptotic behavior of the maximum value of the sequence as $m$ approaches infinity. In the present work, we generalize their results by considering the sequence $\{(1+a)^{-r}\sum_{i=0}^{r}\binom{m}{i}a^{i}\}_{r=0}
^{m}$ for integers $a \geq 1$. We also consider a family of discrete probability distributions that naturally arises from this sequence.
\end{abstract}

\section{Introduction}

A sequence $(a_0,a_1,\dots,a_d)$ of real numbers is \textit{unimodal} if $a_0 \leq a_1 \leq \dots \leq a_{j-1} \leq a_j \geq a_{j+1}\geq \dots \geq a_d$, for some $j$. Unimodal sequences occur frequently in mathematics. For example, the $n$-th row of Pascal's triangle $\binom{n}{0},\binom{n}{1},\dots, \binom{n}{n}$ is unimodal as is its generalization $\binom{n}{0}_{q},\binom{n}{1}_{q},\dots, \binom{n}{n}_{q}$ of $q$-binomial coefficients for $q \geq 0$. However, proving unimodality is often difficult and various tools, including algebra, analysis, probability, and combinatorics, have been used to establish the unimodality of a given sequence. We refer to~\cite{stanley1989log, brenti1989unimodal, brenti1994update,MR3409348} for a broad overview of the subject. If a unimodal sequence is not symmetric, then natural questions are whether the peak is unique and where it is located. These questions are not easy to answer in general.

The motivation for the present work is a recent result of Glasby and Paseman~\cite{glasby2022maximum}, who considered the following weighted binomial sum\begin{equation}\label{Glasby}
    f_{m}(r)\coloneqq\frac{1}{2^r}\sum_{i=0}^{r}\binom{m}{i}.
\end{equation}
In their work, they show that the sequence $\{f_{m}(r)\}_{r=0}^m$ is unimodal with a unique peak at $r = \lfloor \frac{m}{3}\rfloor +1$ for $m\in\Zge\setminus\{0,3,6,9,12\}$. 
Their motivation for determining these properties comes from the fact that the sequence appears in coding theory and information theory \cite{robert1990ash}. Namely, it is desirable for a linear code to have a large rate (to communicate a lot of information) and a large minimal distance (to correct many errors). Thus, for a linear code with parameters $[n, k, d]$, it is desirable that both $k/n$ and $d/n$ are large. The case when $kd/n$ is large was studied in~\cite{alizadeh2022sequences}. A Reed-Muller code $\mathrm{RM}(r, m)$ has $n = 2^m$, $k =\sum_{i=0}^{r}\binom{m}{i}$ and $d = 2^{m-r}$~\cite{ling2004coding}, and hence $kd/n = f_{m}(r)$. 

We consider a natural generalization of the above sequence. Let $a \in \mathbb{R}_{> 0}$ and let
\begin{equation*}
    \f{r}\coloneqq \frac{1}{(1+a)^r}\sum_{i=0}^{r}\binom{m}{i}a^i.
\end{equation*}
Clearly, $f_{m,1}(r) = f_m(r)$. Moreover, $\{\f{r}\}_{r=0}^{m}$ is a nonnegative sequence whose initial and last terms are 1, just like~\eqref{Glasby}. In fact, when $m \in \{0,1\}$, all the terms are 1. So, we focus on $m \in \Zgtwo$. Let 
\begin{equation} \label{radef} r_a(m) \coloneqq \Big\lfloor\frac{a(m+1)+2}{2a+1}\Big\rfloor = \Big\lfloor\frac{am-(a-1)}{2a+1}\Big\rfloor+1.\end{equation} The value $r_a$ depends on $m$, but since $m$ is fixed most of the time, we suppress that dependence in the notation and we write $r_a$ whenever there is no confusion. As we will show, the sequence $\{\f{r}\}_{r=0}^{m}$ is unimodal and if $a \in \Zgt$, the unique peak is located at $r = r_{a}$ (for most of the cases) or at $r = r_{a} -1$ (for some exceptions). Our main result is the following.

\begin{thm}\label{1.1} Let $a \in \mathbb{R}_{>0}$ and $m \in \Zgtwo$. Then
\begin{enumerate}[label=(\alph*)]
    \item[\mylabel{1.1.(a)}{(a)}] the sequence $\{\f{r}\}_{r=0}^{m}$ is log-concave and thus unimodal.
\end{enumerate}
\noindent Moreover, if $a \in \Zgt$, then \begin{enumerate}[label=(\alph*)]
    \item[\mylabel{1.1.(b)}{(b)}]  for all $m\in\Zgtwo \setminus\{3,2a+4, 4a+5\}$ (except the case when $a=1$, where $m$ also cannot be $12 = 6a+6$), the sequence $\{\f{r}\}_{r=0}^{m}$ attains its unique maximum value at $r=r_{a} = \big\lfloor\frac{am-(a-1)}{2a+1}\big\rfloor+1$. For $m\in\{3,2a+4, 4a+5\}$ or $(a,m)=(1,12)$, the sequence attains its unique maximum value at $r=r_{a}-1=\big\lfloor\frac{am-(a-1)}{2a+1}\big\rfloor$.  
    \item[\mylabel{1.1.(c)}{(c)}]
    \begin{equation*}
        \lim_{m\to \infty} f_{m,r}(r_{a})\sqrt{m}\bigg(\frac{1+a}{1+2a}\bigg)^{m}= \frac{(1+a)^{\frac{1}{2}}(1+2a)}{a^{\frac{3}{2}}\sqrt{2\pi}},
    \end{equation*}
    and thus
    \begin{equation*}
        \lim_{a\to \infty}\lim_{m\to \infty} f_{m,r}(r_{a})\sqrt{m}\bigg(\frac{1+a}{1+2a}\bigg)^{m}= \sqrt{\frac{2}{\pi}}.
    \end{equation*}
\end{enumerate}
\end{thm}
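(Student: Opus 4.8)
The plan is to prove \ref{1.1.(a)} by eliminating the geometric weight and reducing to ordinary partial sums. Writing $S_r=\sum_{i=0}^r\binom{m}{i}a^i$, we have $\f{r}=(1+a)^{-r}S_r$, and since $(1+a)^{-r}$ is log-linear it cancels in the log-concavity test: $\f{r}^2\ge\f{r-1}\f{r+1}$ holds if and only if $S_r^2\ge S_{r-1}S_{r+1}$. Thus it suffices to show $(S_r)_r$ is log-concave. I would deduce this from two facts. First, the summands $b_i\coloneqq\binom{m}{i}a^i$ form a log-concave sequence, because $b_i^2/(b_{i-1}b_{i+1})=\binom{m}{i}^2/(\binom{m}{i-1}\binom{m}{i+1})\ge1$ is the classical log-concavity of binomial coefficients. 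Second, the partial sums of any positive log-concave sequence are again log-concave; I would prove this directly, since a short computation gives $B_r^2-B_{r-1}B_{r+1}=b_r^2-(b_{r+1}-b_r)B_{r-1}$ for $B_r=\sum_{i\le r}b_i$, and when $b_{r+1}\le b_r$ this is nonnegative trivially, while otherwise, setting $\rho=b_{r+1}/b_r>1$ and using that log-concavity forces the ratios to be non-increasing (so $b_{r-k}\le b_r\rho^{-k}$), one bounds $B_{r-1}\le b_r/(\rho-1)$, which is exactly the required inequality.

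For \ref{1.1.(b)}, unimodality from \ref{1.1.(a)} reduces the problem to locating the single sign change of the increment. Since $\f{r}\ge\f{r-1}$ is equivalent to $S_r\ge(1+a)S_{r-1}$, i.e.\ to $D(r)\coloneqq\binom{m}{r}a^{r-1}-S_{r-1}\ge0$, the peak is the largest $r$ with $D(r)\ge0$. I would study $D$ through its first difference, which telescopes to
\begin{equation*}
D(r+1)-D(r)=a^{r-1}\binom{m}{r}\,\frac{am-(a+1)-r(2a+1)}{r+1}.
\end{equation*}
This shows $D$ increases for $r<\frac{am-(a+1)}{2a+1}$ and decreases afterward; as $D(1)=m-1>0$ and $D$ is eventually negative, $D$ crosses zero exactly once, on its decreasing branch. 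It then remains to certify that this crossing lies between $r_a$ and $r_a+1$, i.e.\ to prove $D(r_a)\ge0$ and $D(r_a+1)<0$. I would do this by sandwiching $S_{r-1}$ between geometric comparisons: the backward ratio $b_{i-1}/b_i=i/((m-i+1)a)$ is monotone, so $S_{r-1}\le b_{r-1}/(1-\tfrac{r-1}{(m-r+2)a})$, and with a matching lower bound the two target inequalities become explicit inequalities in $m$ and $a$.

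The exceptional values in \ref{1.1.(b)} are precisely the small $m$ for which $\frac{am-(a-1)}{2a+1}$ is an integer: writing $m=k(2a+1)+3$ gives $am-(a-1)=(2a+1)(ak+1)$, so the floor argument equals the integer $ak+1$ and $D$ at the critical index can vanish, producing a tie $\f{r_a-1}=\f{r_a}$ or an off-by-one shift of the maximizer. The main obstacle of \ref{1.1.(b)} is exactly this borderline bookkeeping: one must show that the tie genuinely displaces the maximizer only for the finitely many cases $m\in\{3,2a+4,4a+5\}$ (and additionally $m=6a+6=12$ when $a=1$), whereas for all larger $m\equiv3\pmod{2a+1}$ the strict inequalities still place the unique maximum at $r_a$. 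This requires analyzing the exact value of $D$ near the crossing rather than merely its sign, which is where the geometric bounds above must be pushed to be tight.

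For \ref{1.1.(c)}, I would first extract the dominant behavior of $\f{r_a}=(1+a)^{-r_a}S_{r_a}$ by factoring out the top term. Setting $q=1/(a+1)$, the normalized backward ratios $\binom{m}{r_a-k-1}a^{r_a-k-1}/(\binom{m}{r_a-k}a^{r_a-k})=(r_a-k)/((m-r_a+k+1)a)$ converge to $q<1$ for each fixed $k$ as $m\to\infty$ and are bounded by a single constant $<1$ uniformly in $k$; dominated convergence then gives $S_{r_a}\big/\big(\binom{m}{r_a}a^{r_a}\big)\to\sum_{k\ge0}q^k=(a+1)/a$, so $\f{r_a}\sim\frac{a+1}{a}\binom{m}{r_a}\big(\tfrac{a}{1+a}\big)^{r_a}$. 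Applying Stirling with $r_a=\tfrac{a}{2a+1}m+O(1)$, using the entropy identity $H\!\big(\tfrac{a}{2a+1}\big)+\tfrac{a}{2a+1}\log\tfrac{a}{1+a}=\log\tfrac{1+2a}{1+a}$ for the exponential rate and the fact that the $O(1)$ shift in $r_a$ is negligible because $H'\!\big(\tfrac{a}{2a+1}\big)+\log\tfrac{a}{1+a}=0$, yields
\begin{equation*}
\binom{m}{r_a}\Big(\tfrac{a}{1+a}\Big)^{r_a}\sim\frac{2a+1}{\sqrt{a(a+1)}}\,\frac{1}{\sqrt{2\pi m}}\,\Big(\tfrac{1+2a}{1+a}\Big)^{m}.
\end{equation*}
Multiplying by $\tfrac{a+1}{a}$ and by $\sqrt{m}\big(\tfrac{1+a}{1+2a}\big)^m$ and simplifying produces the stated constant $\frac{(1+a)^{1/2}(1+2a)}{a^{3/2}\sqrt{2\pi}}$, and letting $a\to\infty$ in this closed form gives $\sqrt{2/\pi}$. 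The delicate points here are the uniform geometric domination that justifies interchanging the limit with the infinite sum and the verification that the floor-induced $O(1)$ uncertainty in $r_a$ is asymptotically invisible at the level of the constant.
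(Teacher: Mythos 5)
Your part \ref{1.1.(a)} is correct: the identity $B_r^2-B_{r-1}B_{r+1}=b_r^2-(b_{r+1}-b_r)B_{r-1}$ checks out, and the bound $B_{r-1}\le b_r/(\rho-1)$ from non-increasing ratios closes it. This is a valid (and slightly slicker) alternative to the paper's Lemma~\ref{Lemma2.1}. Your outline of part \ref{1.1.(c)} is also sound and genuinely different from the paper: you get $S_{r_a}/\bigl(\binom{m}{r_a}a^{r_a}\bigr)\to(a+1)/a$ by dominated convergence rather than from the two-sided bounds of Proposition~\ref{prop:bounds} (so you do not need \ref{1.1.(b)} as input), and the entropy/Stirling computation with the stationarity of $H(p)+p\log\tfrac{a}{1+a}$ at $p=\tfrac{a}{2a+1}$ correctly replaces the paper's case analysis over residue classes mod $2a+1$; the constants all match.

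The gap is in part \ref{1.1.(b)}, and it is not bookkeeping: the geometric sandwich you propose for $S_{r-1}$ is provably too weak to decide either target inequality, because both are \emph{borderline at leading order}. For $D(r_a)\ge 0$ you need $S_{r_a-1}\le b_{r_a}/a$, and your bound $S_{r-1}\le b_{r-1}/(1-\rho)$ with $\rho=\tfrac{r-1}{(m-r+2)a}$ reduces this to $r_a(m-r_a+2)a\le(m-r_a+1)\bigl((m-r_a+2)a-(r_a-1)\bigr)$. Take $a=1$ and $m=3k$, so $r_1=k+1$: the requirement becomes $(k+1)(2k+1)\le 2k(k+1)$, i.e.\ $2k+1\le 2k$, which is false for every $k$. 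So in the entire residue class $m\equiv 3\pmod{2a+1}$ (exactly the class containing all the exceptional values) the upper geometric bound overshoots and proves nothing; this is why the paper needs Lemma~\ref{lem:case2_lem1}, which keeps the top four terms of the sum exact and only bounds the tail geometrically, and even then lands on a polynomial inequality that is \emph{false} for $a=1,k=3$. The second inequality $D(r_a+1)<0$ is worse: there you need a \emph{lower} bound on $S_{r_a}/b_{r_a}$ exceeding $\tfrac{m-r_a}{r_a+1}=\tfrac{a+1}{a}+O(1/m)$, while every backward ratio satisfies $b_{i-1}/b_i\le\tfrac{r_a}{(m-r_a+1)a}=\tfrac{1}{a+1}+O(1/m)$, so any geometric comparison caps $S_{r_a}/b_{r_a}$ at $\tfrac{a+1}{a}+O(1/m)$ as well; the sign of the difference lives entirely in the competing $O(1/m)$ corrections. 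Resolving that is the whole content of the paper's Lemma~\ref{lem:longlemma} and Section~\ref{sec:long prop} (the polynomials $P_n,Q_n$ and the induction on their coefficients), together with the transfer Lemmas~\ref{lem:firstineqforsecondprop} and~\ref{lem:secondineqforsecondprop} that propagate the inequality across each residue class. A smaller point: for the excluded values $m\in\{3,2a+4,4a+5\}$ the inequality $f_{m,a}(r_a-1)<f_{m,a}(r_a)$ does not merely degenerate to a tie --- it reverses, and the maximum sits at $r_a-1$.
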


We prove the log-concavity of $\{\f{r}\}_{r=0}^{m}$ in Section~\ref{sec:log-concav}. Section~\ref{sec:peak} contains the main ingredients for the proof about the location of the peak in the nonexceptional cases, while Section~\ref{sec:long prop} contains the proof of a key lemma used in Section~\ref{sec:peak}. The proof of Theorem~\ref{1.1}~\ref{1.1.(b)} is completed in Section~\ref{exceptions}. We find bounds for the maximal value of $\{\f{r}\}_{r=0}^{m}$ and prove its asymptotic behavior in Section~\ref{sec:maxvalue}. In Section~\ref{sec:avgvalue}, we consider the probability distribution obtained by appropriately normalizing the sequence $\{\f{r}\}_{r=0}^{m}$. Interestingly, we find that the mean of this distribution is asymptotically close to the mode $r_a$. Finally, we end with some comments in Section~\ref{sec:discussion}. In many places, we use the general ideas from~\cite{glasby2022maximum}, but we have to come up with different arguments because some of the proofs in~\cite{glasby2022maximum} rely on the fact that $a=1$. We give more detail on how our approach compares to the approach in~\cite{glasby2022maximum} in each section.


\section{Log-concavity of $\{\f{r}\}_{r=0}^{m}$} \label{sec:log-concav}

Recall that a sequence of nonnegative real numbers $\{x_{i}\}_{i\geq0}$ is \textit{log-concave} if $x_{k}^{2}\geq x_{k-1}x_{k+1}$ for all $k\geq1$, and it is \textit{strongly log concave} if $x_{k}x_{l}\geq x_{k-1}x_{l+1}$ for all $0<k\leq l$. In fact, these two notions are equivalent. It is also well-known that log-concave sequences of positive numbers are unimodal. Also, as explained in \cite{brenti1989unimodal}, term-wise (Hadamard) product of nonnegative log-concave finite sequences is log-concave. The following simple lemma states that the sequence of partial sums of Hadamard product of two nonnegative log-concave sequences is also log-concave. Although likely known, we could not find this result in the literature, so we state it here with proof for completeness. We use it to show that our sequence $\{\f{r}\}_{r=0}^{m}$ is log-concave, and thus it is unimodal.

\begin{lemma}\label{Lemma2.1}
    Let $\{x_{i}\}_{i\geq0}$ and $\{y_{i}\}_{i\geq0}$ be two sequences of nonnegative real numbers that are log-concave. For each nonnegative integer $k$, we set $z_{k}\coloneqq\sum_{i=0}^{k}x_{i}y_{i}$. Then, the sequence $\{z_{i}\}_{i\geq0}$ is log-concave.
\end{lemma}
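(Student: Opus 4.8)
The plan is to split the statement into two independent facts. First, the pointwise product $w_i \coloneqq x_i y_i$ of two log-concave sequences is again log-concave: since $x$ and $y$ are nonnegative and satisfy $x_k^2 \ge x_{k-1}x_{k+1}$ and $y_k^2 \ge y_{k-1}y_{k+1}$, multiplying these yields $w_k^2 = x_k^2 y_k^2 \ge (x_{k-1}x_{k+1})(y_{k-1}y_{k+1}) = w_{k-1}w_{k+1}$. In exactly the same way, multiplying the \emph{strong} log-concavity inequalities for $x$ and $y$ shows that $w$ is strongly log-concave, that is, $w_i w_{k+1} \le w_{i+1}w_k$ whenever $0 \le i \le k-1$; here I invoke the equivalence of log-concavity and strong log-concavity recorded above. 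With this reduction in hand, the lemma becomes the assertion that the partial sums $z_k = \sum_{i=0}^k w_i$ of a nonnegative strongly log-concave sequence $w$ are themselves log-concave, which is where the real content lies.

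For the partial-sum step I would verify $z_k^2 \ge z_{k-1}z_{k+1}$ for each $k \ge 1$ by a direct manipulation. Writing $z_k = z_{k-1} + w_k$ and $z_{k+1} = z_{k-1} + w_k + w_{k+1}$ and expanding gives the identity
\[
z_k^2 - z_{k-1}z_{k+1} = w_k^2 + z_{k-1}(w_k - w_{k+1}) = w_k^2 + \sum_{i=0}^{k-1}\bigl(w_i w_k - w_i w_{k+1}\bigr),
\]
whose virtue is that it isolates precisely the regime that causes trouble.

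The main obstacle is that when $w$ is increasing at $k$ (so $w_{k+1} > w_k$) the term $z_{k-1}(w_k - w_{k+1})$ is negative, and it is not a priori clear that $w_k^2$ compensates for it; this is exactly where strong log-concavity is needed rather than plain log-concavity. Applying $w_i w_{k+1} \le w_{i+1} w_k$ for $0 \le i \le k-1$ to each summand gives $\sum_{i=0}^{k-1} w_i w_{k+1} \le w_k \sum_{i=0}^{k-1} w_{i+1} = w_k\sum_{i=1}^{k} w_i$, and after the telescoping $\sum_{i=0}^{k-1}(w_i - w_{i+1}) = w_0 - w_k$ the identity above collapses to $z_k^2 - z_{k-1}z_{k+1} \ge w_k^2 + w_k(w_0 - w_k) = w_0 w_k \ge 0$, using only the nonnegativity of the entries. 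Since the $z_k$ are manifestly nonnegative, this establishes log-concavity and completes the proof. I would close by remarking that the only delicate point, namely possible zero entries, is already absorbed into the cited equivalence of log-concavity and strong log-concavity, so no separate case analysis is required.
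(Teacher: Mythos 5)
Your proposal is correct and follows essentially the same route as the paper: reduce to the Hadamard product $w_i = x_i y_i$ being log-concave, then show that partial sums of a nonnegative log-concave sequence are log-concave by invoking strong log-concavity ($w_i w_{k+1} \le w_{i+1} w_k$) and telescoping. Your expansion $z_k^2 - z_{k-1}z_{k+1} = w_k^2 + z_{k-1}(w_k - w_{k+1})$ is just a rearrangement of the paper's reduction to $\bigl(\sum_{i=0}^{k}w_i\bigr)w_k \ge \bigl(\sum_{i=0}^{k-1}w_i\bigr)w_{k+1}$, and the rest is identical.
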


\begin{proof}
Let $\{u_{i}\}$ be a log-concave sequence of nonnegative real numbers. The log-concavity of $\{\sum_{i=0}^{k}u_{i}\}_{k\geq0}$ is equivalent to

\begin{equation*}        
    \Bigg(\sum_{i=0}^{k}u_{i}\Bigg)^{2}\geq \Bigg(\sum_{i=0}^{k-1}u_{i}\Bigg)\Bigg(\sum_{i=0}^{k+1}u_{i}\Bigg).
\end{equation*}
If we subtract $\displaystyle\Bigg(\sum_{i=0}^{k-1}u_{i}\Bigg)\Bigg(\sum_{i=0}^{k}u_{i}\Bigg)$ from both sides of the above inequality, we have

\begin{equation*}       
    \Bigg(\sum_{i=0}^{k}u_{i}\Bigg)\cdot u_{k}\geq \Bigg(\sum_{i=0}^{k-1}u_{i}\Bigg)\cdot u_{k+1}.
\end{equation*}
Since log-concave sequences are strongly log-concave, by the log-concavity of $\{u_{i}\}_{i\geq0}$, we have $u_{i+1}u_{k}\geq u_{i}u_{k+1}$ for all $0\leq i\leq k-1$. Hence,

\begin{equation*}
    \Bigg(\sum_{i=0}^{k}u_{i}\Bigg)\cdot u_{k}
    \geq\Bigg(\sum_{i=1}^{k}u_{i}\Bigg)\cdot u_{k}
    =\sum_{i=0}^{k-1}u_{i+1}\cdot u_{k}
    \geq\sum_{i=0}^{k-1}u_{i}\cdot u_{k+1}\\
    =\Bigg(\sum_{i=0}^{k-1}u_{i}\Bigg)\cdot u_{k+1}.
\end{equation*}
Setting $u_{k}=x_{k}y_{k}$ proves the result.
\end{proof}

\begin{proof}[Proof of Theorem \ref{1.1}~\ref{1.1.(a)}]
To check the log-concavity of the sequence $\{f_{m,a}(r)\}_{r=0}^{m}$, we need to check $f_{m,a}^{2}(r) \geq f_{m,a}(r-1)f_{m,a}(r+1) \text{ for all } 1\leq r<m$.
This is equivalent to 
\begin{equation*}
    \Bigg(\sum_{i=0}^{r}\binom{m}{i}a^{i}\Bigg)^{2} \geq \Bigg(\sum_{i=0}^{r-1}\binom{m}{i}a^{i}\Bigg)\Bigg(\sum_{i=0}^{r+1}\binom{m}{i}a^{i}\Bigg).
\end{equation*}
Note that the last inequality is asserting that the sequence $\{\sum_{i=0}^{k}x_{i}y_{i}\}_{k=0}^{m}$ is log-concave for $x_{i}=\binom{m}{i}$ and $y_{i}=a^{i}$. This follows from Lemma~\ref{Lemma2.1} and the well-known facts that $\{\binom{m}{i}\}_{i=0}^{m}$ and $\{a^{i}\}_{i=0}^{m}$ are log-concave.
\end{proof}


\section{Peak location for $\{\f{r}\}_{r=0}^{m}$} \label{sec:peak}

In this section, we show that the sequence $\{\f{r}\}_{r=0}^{m}$ achieves its maximum value at $r=r_a$ for all but finitely many integers $m$. We prove it by showing the following two propositions.

\begin{prop}\label{prop:firstinequality}
For $a \in \mathbb{Z}_{\geq 1}$ and $m\in\Zgtwo\setminus\{3,2a+4,4a+5\}$ (except the case when $a=1$, where $m$ also cannot be $12 = 6a+6$),
\begin{equation*}
    \f{r_{a}-1}<\f{r_{a}},  
\end{equation*}
where $r_{a}$ is given by~\eqref{radef}.
\end{prop}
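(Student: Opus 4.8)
The plan is to first reduce the claimed inequality to a statement about the partial sums of $\binom{m}{i}a^{i}$. Writing $S_r \coloneqq \sum_{i=0}^{r}\binom{m}{i}a^{i}$, so that $\f{r} = S_r/(1+a)^{r}$, I multiply $\f{r_a-1} < \f{r_a}$ through by $(1+a)^{r_a}$ to get $S_{r_a} > (1+a)S_{r_a-1}$, substitute $S_{r_a} = S_{r_a-1} + \binom{m}{r_a}a^{r_a}$, cancel $S_{r_a-1}$, and divide by $a$. This turns the claim into the equivalent inequality
\begin{equation*}
\sum_{i=0}^{r_a-1}\binom{m}{i}a^{i} < \binom{m}{r_a}a^{r_a-1}.
\end{equation*}
From here on I abbreviate $r = r_a$ and aim to bound the left-hand side.

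The main tool is a geometric majorant. The ratio of consecutive summands is $\frac{\binom{m}{j-1}a^{j-1}}{\binom{m}{j}a^{j}} = \frac{j}{(m-j+1)a}$, which is increasing in $j$; hence for every $j \le r$ it is at most $q \coloneqq \frac{r}{(m-r+1)a}$. Telescoping gives $\binom{m}{i}a^{i} \le \binom{m}{r}a^{r}\,q^{\,r-i}$ for $0 \le i \le r-1$, with strict inequality once $i \le r-2$, so that
\begin{equation*}
\sum_{i=0}^{r-1}\binom{m}{i}a^{i} < \binom{m}{r}a^{r}\,\frac{q}{1-q}
\end{equation*}
whenever $0 < q < 1$ (and $r \ge 2$). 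Consequently the target inequality holds as soon as $a\,\frac{q}{1-q} \le 1$, i.e. $q \le \frac{1}{a+1}$, which rearranges to the clean condition $r \le \frac{a(m+1)}{2a+1}$.

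It then remains to compare $r_a$ with this threshold. Writing $am-(a-1) = (2a+1)Q + R$ with $0 \le R \le 2a$, so that $r_a = Q+1$, a direct computation shows that $r_a \le \frac{a(m+1)}{2a+1}$ holds precisely when $R \ge 2$. Thus for every $m$ with $R \ge 2$ the proposition follows immediately from the bound above (the degenerate case $r_a = 1$, which occurs only at $m=2$ and there has $R=a+1\ge 2$, is trivial).

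The genuinely delicate cases are $R \in \{0,1\}$, and I expect these to be the main obstacle. There $q > \frac{1}{a+1}$, so $\frac{q}{1-q}$ exceeds $\frac{1}{a}$ and the crude majorant fails; moreover the finite geometric sum $q + q^{2} + \dots + q^{r}$ still tends to $\frac{q}{1-q} > \frac{1}{a}$ as $m \to \infty$, so merely retaining the $q^{r}$ correction does not rescue the estimate. To win one must exploit that the ratios $\frac{j}{(m-j+1)a}$ are \emph{strictly} below $q$ for $j < r$, so that the true sum falls short of its geometric majorant by a margin that, for large $m$, suffices to beat $\frac{1}{a}$. Making this quantitative is exactly the content of the key lemma proved in Section~\ref{sec:long prop}. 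Granting it, the inequality holds for all sufficiently large $m$ with $R \in \{0,1\}$, and the finitely many small $m$ in this regime for which it actually fails are precisely the excluded values $3$, $2a+4$, $4a+5$ (together with $6a+6=12$ when $a=1$); these, and the remaining borderline small cases, are then verified by direct evaluation.
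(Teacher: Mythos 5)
Your reduction to $\sum_{i=0}^{r_a-1}\binom{m}{i}a^i < \binom{m}{r_a}a^{r_a-1}$ matches the paper's \eqref{eq:prop1_eq}, and your geometric-majorant argument for the residue classes with $R\ge 2$ is correct and is a genuinely cleaner route than the paper's Lemmas~\ref{lem:case1_lem1} and~\ref{lem:case1_lem2}, which pair consecutive terms to get a telescoping bound. However, there is a real gap: the classes $R\in\{0,1\}$, i.e.\ $m\equiv 3$ and $m\equiv 1\pmod{2a+1}$, are not actually proved. You acknowledge that the majorant fails there, but the rescue you invoke does not exist where you point: the lemma proved in Section~\ref{sec:long prop} is Lemma~\ref{lem:longlemma}, which concerns the \emph{other} inequality $\f{r_a}>\f{r_a+1}$ (Proposition~\ref{prop:secondinequality}) and says nothing about the present sum. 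The lemma that actually carries the hard case of this proposition is Lemma~\ref{lem:case2_lem1}; it covers only $m\equiv 3\pmod{2a+1}$, and its proof (a geometric tail bound with ratio below $\tfrac{1}{a+1}$ starting from the fifth factor, followed by an explicit polynomial positivity check in $k$) is exactly what determines that the failures are $k\in\{0,1,2\}$, i.e.\ $m\in\{3,2a+4,4a+5\}$, plus $k=3$ (i.e.\ $m=12$) when $a=1$. Your assertion that the exceptional values are ``precisely'' these is therefore the conclusion of the missing argument, not something you have established.

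The class $m\equiv 1\pmod{2a+1}$ ($R=1$) is a second, separate omission: your own threshold computation shows $r_a>\tfrac{a(m+1)}{2a+1}$ there, so your bound fails, yet the proposition claims no exceptional $m$ in that class, and ``holds for all sufficiently large $m$'' with unidentified and unverified small cases is not a proof. The paper closes this class (indeed every class with $R\ne 0$) via Lemma~\ref{lem:case1_lem1}: the two-term inequality $\binom{m}{i-1}a^{i-1}+\binom{m}{i}a^i\le\binom{m}{i+1}a^i-\binom{m}{i-1}a^{i-2}$ for $1\le i<r_a$, which telescopes in pairs and only requires $i\le\frac{am-a}{2a+1}$ --- exactly the slack available whenever $\frac{am-(a-1)}{2a+1}$ is not an integer. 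Some device of this strength (a paired comparison rather than a one-step geometric one) is what you would need to supply for $R=1$, and for $R=0$ you need an independent argument in the spirit of Lemma~\ref{lem:case2_lem1}; as written, your proof establishes the proposition only for $m$ with $R\ge 2$.
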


\begin{prop}\label{prop:secondinequality}
For $a \in \mathbb{Z}_{\geq 1}$ and $m\in\Zgtwo$ ,
\begin{equation*}
    \f{r_{a}}>\f{r_{a}+1},
\end{equation*}
where $r_{a}$ is given by~\eqref{radef}.
\end{prop}

\subsection {Proof of Proposition~\ref{prop:firstinequality}}

First, we note that

\begin{equation}\label{eq:prop1_eq}
    f_{m,a}(r_{a}-1)<f_{m,a}(r_{a}) \iff \sum_{i=0}^{r_{a}-1}\binom{m}{i}a^{i} < \binom{m}{r_{a}}a^{r_{a}-1}.   
\end{equation}
Before we proceed, we make the following observation:
\begin{equation*}
    \frac{am-(a-1)}{2a+1} \text{ is an integer} \iff am\equiv a-1 \text{ (mod $2a+1$)}.
\end{equation*}
Since $a$ and $2a+1$ are relatively prime, the above linear congruence has a unique solution and one can check that $m\equiv 3 \text{ (mod $2a+1$)}$ is the unique solution.

We split the proof of the right-hand inequality in~\eqref{eq:prop1_eq} into two cases: when $\frac{am-(a-1)}{2a+1}$ is not an integer (i.e., when $m\not\equiv 3 \text{ (mod $2a+1$)}$) and when $\frac{am-(a-1)}{2a+1}$ is an integer (i.e., $m\equiv 3 \text{ (mod $2a+1$)}$). The first case follows from Lemma~\ref{lem:case1_lem2} below, and we had to find a proof that uses a  different idea from~\cite{glasby2022maximum} because the original proof for $a=1$ in \cite{glasby2022maximum} cannot be applied to other positive integer values of $a$. The proof of the second case follows from Lemma~\ref{lem:case2_lem1} below, which is a generalization of a result in~\cite{glasby2022maximum}. The following lemma, which we prove first, is used in the proof of  Lemma~\ref{lem:case1_lem2}.

\begin{lemma}{\label{lem:case1_lem1}} Let $a \in \Zgt$ and $m \geq 4$ such that $m\not\equiv 3 \text{ (mod $2a+1$)}$. For any integer $i$ such that $1\leq i< r_{a}$, we have
\begin{equation*}
    \binom{m}{i-1}a^{i-1}+\binom{m}{i}a^{i}\leq \binom{m}{i+1}a^{i}-\binom{m}{i-1}a^{i-2}.
\end{equation*}
\end{lemma}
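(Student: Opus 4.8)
The plan is to reduce the asserted inequality to a single polynomial inequality in $i$ (with parameters $m,a$) and then to extract from the hypotheses $1\le i< r_a$ and $m\not\equiv 3\ (\mathrm{mod}\ 2a+1)$ exactly the arithmetic fact needed to close it. First I would move the term $\binom{m}{i-1}a^{i-2}$ to the left-hand side and divide through by the positive quantity $a^{i-2}$, turning the claim into
\begin{equation*}
\binom{m}{i-1}(a+1)\leq\left(\binom{m}{i+1}-\binom{m}{i}\right)a^{2}.
\end{equation*}
Using the standard ratios $\binom{m}{i-1}=\binom{m}{i}\frac{i}{m-i+1}$ and $\binom{m}{i+1}=\binom{m}{i}\frac{m-i}{i+1}$, dividing by $\binom{m}{i}>0$, and clearing the positive denominators $m-i+1$ and $i+1$, this is equivalent to
\begin{equation*}
i(a+1)(i+1)\leq (m-2i-1)\,a^{2}\,(m-i+1).\tag{$\ast$}
\end{equation*}

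The crux is to extract the correct consequence of $1\le i<r_a$. Since $m\not\equiv 3\ (\mathrm{mod}\ 2a+1)$, the number $i^{\ast}\coloneqq\frac{am-(a-1)}{2a+1}$ is \emph{not} an integer, so $i\le r_a-1=\lfloor i^{\ast}\rfloor<i^{\ast}$, which gives the strict inequality $(2a+1)i<am-a+1$ \emph{between integers}. Hence in fact $(2a+1)i\le am-a=a(m-1)$, and this rearranges to the two equivalent sharpened bounds
\begin{equation*}
a(m-2i-1)\geq i,\qquad\text{equivalently}\qquad a(m-i+1)\geq (a+1)i+2a.
\end{equation*}
I want to stress that the passage from $<$ to $\le$ via non-integrality is essential: at the continuous critical value $i=i^{\ast}$ the inequality $(\ast)$ actually \emph{fails}, so one genuinely needs integrality to force $i$ a definite distance below $i^{\ast}$. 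This is also the point where my argument must differ from the $a=1$ treatment, since there the relevant estimates simplify.

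With the sharpened bound in hand, $(\ast)$ follows by a short chain. From $a(m-2i-1)\ge i$ we get $a^{2}(m-2i-1)\ge ai$, so the right-hand side of $(\ast)$ is at least $ai(m-i+1)$; it therefore suffices to prove $i(a+1)(i+1)\le ai(m-i+1)$, i.e.\ (dividing by $i>0$) $(a+1)(i+1)\le a(m-i+1)$. The second form of the sharpened bound gives $a(m-i+1)\ge(a+1)i+2a$, while $(a+1)(i+1)=(a+1)i+(a+1)\le(a+1)i+2a$ precisely because $a\ge 1$. Concatenating these estimates yields $(\ast)$, and hence the lemma. The main obstacle is thus conceptual rather than computational: recognizing that the naive real-variable estimate is insufficient and that $i<r_a$ must first be converted into the integer inequality $(2a+1)i\le a(m-1)$. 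Once that bound is isolated, every remaining step is elementary, and it is exactly the single hypothesis $a\ge 1$ that closes the final comparison.
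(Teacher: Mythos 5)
Your proposal is correct and follows essentially the same route as the paper: the same reduction to $i(i+1)(a+1)\le (m-2i-1)(m-i+1)a^{2}$, the same use of non-integrality of $\frac{am-(a-1)}{2a+1}$ to sharpen $i<r_a$ to $(2a+1)i\le a(m-1)$, and the same splitting into the two factor bounds $a(m-2i-1)\ge i$ and $(a+1)(i+1)\le a(m-i+1)$, closed using $a\ge 1$. The only cosmetic difference is that the paper phrases the two bounds as $\frac{i}{m-2i-1}\le a$ and $\frac{i+1}{m-i+1}\le\frac{a}{a+1}$ and multiplies them, whereas you chain the inequalities additively.
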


\begin{proof}

The above inequality can be simplified to the equivalent inequality as follows:

\begin{equation}
\begin{aligned} \label{lemma3.3ineq1}
    &\binom{m}{i-1}a^{i-1}+\binom{m}{i}a^{i}\leq \binom{m}{i+1}a^{i}-\binom{m}{i-1}a^{i-2}\\
    &\iff\binom{m}{i-1}a^{i-1}+\binom{m}{i-1}a^{i-2}\leq \binom{m}{i+1}a^{i}-\binom{m}{i}a^{i}\\
    &\iff\binom{m}{i-1}a^{i-2}(a+1)\leq\bigg(\frac{m-i}{i+1}-1\bigg)\binom{m}{i}a^{i}\\
    &\iff i(i+1)(a+1)\leq (m-2i-1)(m-i+1)a^2.
\end{aligned}
\end{equation}
Since $i<r_{a}$ and $\frac{am-(a-1)}{2a+1}$ is not an integer (this is because $m\not\equiv 3 \text{ (mod $2a+1$)}$), we have 
\begin{equation} \label{something} i\leq r_{a}-1 = \bigg\lfloor\frac{am-(a-1)}{2a+1}\bigg\rfloor \leq \frac{am-a}{2a+1}.
\end{equation}
Hence, $m-2i-1\geq m-\frac{2(am-a)}{2a+1}-1=\frac{(2a+1)m-2(am-a)-(2a+1)}{2a+1}=\frac{m-1}{2a+1}>0$ and similarly $m-i+1>0$.
Therefore, the last inequality in~\eqref{lemma3.3ineq1} is equivalent to
\begin{equation*}
 \frac{i}{m-2i-1}\cdot\frac{i+1}{m-i+1} \leq \frac{a^2}{a+1}.
\end{equation*}
   We claim that for any $1\leq i< r_{a}$, we have
\begin{equation}\label{partial}
    \frac{i}{m-2i-1}\leq a \text{   and   } \frac{i+1}{m-i+1}\leq \frac{a}{a+1}.
\end{equation}
One can readily see that
\begin{equation*}
    \frac{i}{m-2i-1}\leq a \iff i\leq \frac{am-a}{2a+1}
\end{equation*}
and
\begin{equation*}
    \frac{i+1}{m-i+1}\leq \frac{a}{a+1} \iff i\leq \frac{am-1}{2a+1}.
\end{equation*}
The above two inequalities follow from ~\eqref{something} and the fact that $a\geq1$. 
\end{proof}

\begin{lemma}{\label{lem:case1_lem2}} Let $a \in \Zgt$ and $m \geq 4$ s.t. $m\not\equiv 3 \text{ (mod $2a+1$)}$. Then $\sum_{i=0}^{r_{a}-1}\binom{m}{i}a^{i} < \binom{m}{r_{a}}a^{r_{a}-1}$.
\end{lemma}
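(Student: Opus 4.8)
The plan is to recast the inequality in terms of the ratios of consecutive summands and then bound the truncated sum by a geometric series, saving the sharp floor estimate on $r_a$ for the very last and tightest step.

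Write $t_i \coloneqq \binom{m}{i}a^i$ and $S_k \coloneqq \sum_{i=0}^{k}t_i$, so that by \eqref{eq:prop1_eq} the assertion of \cref{lem:case1_lem2} is equivalent to $a\,S_{r_a-1} < t_{r_a}$. Set $q_k \coloneqq t_k/t_{k+1} = \frac{k+1}{(m-k)a}$ for $0 \le k \le m-1$; since the numerator increases and the denominator decreases with $k$, the sequence $\{q_k\}$ is strictly increasing. First I would note that for $m\ge 4$ one has $r_a \ge 2$ (because $\frac{am-a+1}{2a+1}\ge 1$), so that $q_{r_a-2}$ is defined.

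The key estimate isolates the largest summand and bounds the tail geometrically. For every $i \le r_a-2$ we have $t_i = t_{r_a-1}\prod_{k=i}^{r_a-2}q_k \le t_{r_a-1}\,q_{r_a-2}^{\,r_a-1-i}$, since each factor $q_k$ is at most $q_{r_a-2}$. Because $r_a-1 < m/2$ forces $q_{r_a-2}<1$, summing gives $S_{r_a-2} < t_{r_a-1}\,\frac{q_{r_a-2}}{1-q_{r_a-2}}$, and hence $S_{r_a-1} = t_{r_a-1}+S_{r_a-2} < \frac{t_{r_a-1}}{1-q_{r_a-2}}$. Thus $a\,S_{r_a-1} < t_{r_a}$ will follow once I show $\frac{a\,t_{r_a-1}}{t_{r_a}} < 1-q_{r_a-2}$, i.e.\ $a\,q_{r_a-1}+q_{r_a-2}<1$, which written out is
\begin{equation*}
\frac{r_a}{m-r_a+1} + \frac{r_a-1}{(m-r_a+2)\,a} < 1.
\end{equation*}

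This is where the precise peak location enters. Since $m\not\equiv 3\pmod{2a+1}$, the bound $r_a-1\le \frac{a(m-1)}{2a+1}$ from \eqref{something} holds, equivalently $r_a \le \frac{am+a+1}{2a+1}$. Both fractions above are increasing in $r_a$, so it suffices to check the inequality at $r_a=\frac{am+a+1}{2a+1}$. Substituting, the two terms become $\frac{am+a+1}{(a+1)m+a}$ and $\frac{m-1}{(a+1)m+3a+1}$; using $1-\frac{am+a+1}{(a+1)m+a}=\frac{m-1}{(a+1)m+a}$, their total deficit from $1$ equals
\begin{equation*}
(m-1)\!\left(\frac{1}{(a+1)m+a}-\frac{1}{(a+1)m+3a+1}\right) = \frac{(m-1)(2a+1)}{\big((a+1)m+a\big)\big((a+1)m+3a+1\big)} > 0
\end{equation*}
for $m\ge 4$, which closes the argument.

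I expect the main obstacle to be exactly this final inequality $a\,q_{r_a-1}+q_{r_a-2}<1$: as $m\to\infty$ the two displayed fractions tend to $\frac{a}{a+1}$ and $\frac{1}{a+1}$, so their sum approaches $1$ and the estimate is asymptotically tight. This is precisely why a cruder geometric bound using only $q_{r_a-1}$ (which would need $(a+1)q_{r_a-1}<1$) fails; keeping the top term $t_{r_a-1}$ exact and bounding only $S_{r_a-2}$ geometrically buys the extra room, and the sharp floor estimate $r_a-1\le\frac{a(m-1)}{2a+1}$ supplies exactly the slack required. (Alternatively one could feed the pairwise bound of \cref{lem:case1_lem1} into a top-down telescoping of $S_{r_a-1}$; the ratio estimate above seems cleaner and avoids tracking the lower-order correction terms.)
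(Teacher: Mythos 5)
Your proof is correct, but it takes a genuinely different route from the paper's. The paper first establishes the pairwise inequality of Lemma~\ref{lem:case1_lem1}, then telescopes the sum two terms at a time, splitting into cases according to the parity of $r_a$. You instead keep the top term $t_{r_a-1}$ exact, dominate the remaining partial sum by a geometric series with ratio $q_{r_a-2}$ (using that the ratios $q_k$ increase), and reduce the whole lemma to the single scalar inequality $a\,q_{r_a-1}+q_{r_a-2}<1$, which you verify by substituting the sharp bound $r_a\le\frac{am+a+1}{2a+1}$ coming from \eqref{something}. I checked the details: the monotonicity of $q_k$, the positivity of $m-r_a+1$ at the substituted endpoint, and the final deficit computation $\frac{(m-1)(2a+1)}{((a+1)m+a)((a+1)m+3a+1)}>0$ all hold, and the strictness you need is supplied by replacing the finite geometric sum with the infinite one. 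What your approach buys is the elimination of Lemma~\ref{lem:case1_lem1} and of the parity case analysis, at the cost of a slightly more delicate endpoint computation; both arguments ultimately consume the same piece of arithmetic information, namely $r_a-1\le\frac{a(m-1)}{2a+1}$, which is exactly where the hypothesis $m\not\equiv 3\pmod{2a+1}$ enters. Your closing remark about why the cruder bound $(a+1)q_{r_a-1}<1$ fails is also accurate: it would require $r_a\le\frac{am+a}{2a+1}$, which misses the true bound by $\frac{1}{2a+1}$.
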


\begin{proof}
Since $m\geq4$, we have $r_{a}\geq2$. The proof of the claim is slightly different, depending on the parity of $r_{a}$.

\noindent \emph{(a) The case when $r_{a}$ is even.}

In this case, by Lemma~\ref{lem:case1_lem1},

\begin{align*}
    \sum_{i=0}^{r_{a}-1}\binom{m}{i}a^{i}=\sum_{j=0}^{\frac{r_{a}-2}{2}}\Bigg[\binom{m}{2j}a^{2j}+\binom{m}{2j+1}a^{2j+1}\Bigg]
    &\leq\sum_{j=0}^{\frac{r_{a}-2}{2}}\Bigg[\binom{m}{2j+2}a^{2j+1}-\binom{m}{2j}a^{2j-1}\Bigg]\\
    &=\binom{m}{r_{a}}a^{r_{a}-1}-\binom{m}{0}a^{-1}\\
    &<\binom{m}{r_{a}}a^{r_{a}-1}.
\end{align*}

\noindent \emph{(b) The case when $r_{a}$ is odd.}

Again, by Lemma~\ref{lem:case1_lem1},

\begin{align*}
    \sum_{i=0}^{r_{a}-1}\binom{m}{i}a^{i}&=\binom{m}{0}+\sum_{j=1}^{\frac{r_{a}-1}{2}}\Bigg[\binom{m}{2j-1}a^{2j-1}+\binom{m}{2j}a^{2j}\Bigg]\\
    &\leq\binom{m}{0}+\sum_{j=1}^{\frac{r_{a}-1}{2}}\Bigg[\binom{m}{2j+1}a^{2j}-\binom{m}{2j-1}a^{2j-2}\Bigg]\\
    &=\binom{m}{0}+\binom{m}{r_{a}}a^{r_{a}-1}-\binom{m}{1}\\
    &<\binom{m}{r_{a}}a^{r_{a}-1}.
\end{align*}   
\end{proof}

\begin{lemma}{\label{lem:case2_lem1}}
For any integers $a\geq 2$ and $k\geq3$ (and also for integers $a=1$ and $k\geq4$),
\begin{equation}\label{eq:lemma3.5_ineq}
    \sum_{i=0}^{ak+1}\binom{(2a+1)k+3}{i}a^{i} < \binom{(2a+1)k+3}{ak+2}a^{ak+1}.
\end{equation}
\end{lemma}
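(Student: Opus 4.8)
The plan is to work directly with the stated inequality \eqref{eq:lemma3.5_ineq}, writing $N\coloneqq(2a+1)k+3$, $R\coloneqq ak+2$, and $t_i\coloneqq\binom{N}{i}a^i$, so that the claim reads $\sum_{i=0}^{R-1}t_i<t_R/a$. The object to control is the consecutive ratio $t_i/t_{i-1}=\frac{(N-i+1)a}{i}$, and a direct check shows $t_i/t_{i-1}\ge1$ for every $i\le R$. Thus the summands are increasing across the entire range and the sum is dominated by its last term $t_{R-1}$; this is what makes the inequality believable, but also delicate, since $t_{R-1}$ and $t_R$ are of comparable size near the peak.

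First I would establish the clean, lossy version by induction. Writing $S_r\coloneqq\sum_{i=0}^{r}t_i$, the inequality $aS_r<t_{r+1}$ propagates from $r-1$ to $r$ as long as $(1+a)t_r\le t_{r+1}$, i.e. as long as $\frac{(N-r)a}{r+1}\ge 1+a$, and one checks this holds exactly for $r\le ak=R-2$. Hence $aS_{R-2}<t_{R-1}$, equivalently $S_{R-2}<t_{R-1}/a$. This, however, is not quite enough: since $S_{R-1}=S_{R-2}+t_{R-1}$ and, using $N-2R+1=k$,
\[
t_R-a\,t_{R-1}=a^{R}\Big(\tbinom{N}{R}-\tbinom{N}{R-1}\Big)=a\,t_{R-1}\cdot\frac{k}{ak+2},
\]
the desired conclusion $aS_{R-1}<t_R$ is \emph{exactly} equivalent to the sharper bound
\[
S_{R-2}<\frac{k}{ak+2}\,t_{R-1}.
\]
The induction only delivers the weaker constant $\tfrac1a$, and $\frac{k}{ak+2}<\tfrac1a$, so the final term cannot be absorbed for free. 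This gap is intrinsic: both the normalized tail $S_{R-2}/t_{R-1}$ and the target $\frac{k}{ak+2}$ converge to $\tfrac1a$ as $k\to\infty$ (the tail being asymptotically geometric with ratio $\tfrac1{a+1}$, and $\tfrac1{a+1}\cdot\tfrac{a+1}{a}=\tfrac1a$), so any estimate that replaces each ratio $t_{i-1}/t_i$ by the uniform upper bound $\tfrac1{a+1}$ fails by precisely the required amount.

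The heart of the matter is therefore to prove the sharpened bound, for which I would estimate the normalized tail exactly rather than geometrically. Using the ratios above,
\[
\frac{S_{R-2}}{t_{R-1}}=\sum_{l=1}^{R-1}\frac{t_{R-1-l}}{t_{R-1}}=\sum_{l=1}^{R-1}\frac{1}{a^{l}}\prod_{s=1}^{l}\frac{ak+2-s}{(a+1)k+2+s},
\]
and the plan is to show this is strictly less than $\frac{k}{ak+2}$. Each factor $\frac{ak+2-s}{(a+1)k+2+s}$ is strictly below $\frac{a}{a+1}$, and the point is that these individual deficits \emph{accumulate} in the products; summing the resulting first-order corrections contributes an extra amount of order $1/k$ beyond the geometric main term, which is enough to push the sum below $\frac{k}{ak+2}$ with room to spare. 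I expect the main obstacle to be making this accumulation rigorous and uniform in both $a\ge1$ and $k\ge3$: because the limiting constant sits exactly at the boundary, no fixed finite number of leading terms suffices uniformly in $a$ (one needs more terms as $a$ decreases), so the whole tail must be controlled at once—e.g. by bounding each partial product against a telescoping majorant that already captures the first-order correction. Finally I would dispatch the remaining small cases by direct computation; these are exactly $k=0,1,2$ (together with $k=3$ when $a=1$), corresponding to the values $m=3,\,2a+4,\,4a+5$ (and $m=12$ for $a=1$) excluded in \Cref{1.1}, where the inequality genuinely need not hold.
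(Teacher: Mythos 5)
Everything you actually carry out is correct, and after normalizing by $t_{R-1}$ your framework coincides with the paper's: your factors $\frac{1}{a}\cdot\frac{ak+2-s}{(a+1)k+2+s}$ are exactly the quantities $X_s$ the paper introduces, and your target $S_{R-2}/t_{R-1}<\frac{k}{ak+2}$ is precisely the paper's normalized inequality $\sum_{i}X_1\cdots X_i<\frac{(a+1)k+2}{ak+2}$. You also correctly diagnose why the crude geometric bound fails: replacing every factor by $\frac{1}{a+1}$ gives $\sum_{l\ge0}(a+1)^{-l}=\frac{a+1}{a}$, which overshoots the target by exactly $\frac{2}{a(ak+2)}$. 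But the proof stops precisely where the real work begins. The sharpened estimate $S_{R-2}/t_{R-1}<\frac{k}{ak+2}$ is announced, not proved: ``bounding each partial product against a telescoping majorant that already captures the first-order correction'' is a program rather than an argument, and you concede as much (``I expect the main obstacle to be making this accumulation rigorous''). No majorant is exhibited and no inequality is verified, so what you have actually established is only $S_{R-2}<t_{R-1}/a$, which you yourself show is strictly weaker than what is needed. This is a genuine gap at the decisive step.

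Moreover, your structural claim that ``no fixed finite number of leading terms suffices uniformly in $a$'' is incorrect, and it steers you away from the route that works. The paper keeps the first \emph{four} partial products $1,\,X_1,\,X_1X_2,\,X_1X_2X_3$ exactly, bounds the whole remaining tail by $X_1X_2X_3X_4\cdot\frac{a+1}{a}$ using $X_i<X_4<\frac{1}{a+1}$ for $i>4$, and then clears denominators to reduce the claim to one explicit polynomial inequality in $a$ and $k$, verified to be positive for $a\ge2,\,k\ge3$ and for $a=1,\,k\ge4$ (it is negative at $a=1,\,k=3$, which is exactly why that case is excluded from the lemma and from Theorem~\ref{1.1}). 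In other words, the accumulated deficit of just four exact factors below $\frac{1}{a+1}$ already covers the $O\!\left(\frac{1}{a^2k}\right)$ margin, uniformly in $a\ge1$. The idea you gesture at is right in spirit, but to close the gap you must commit to a concrete truncation, carry out the tail bound, and verify the resulting finite inequality; the small cases $k\le2$ you propose to ``dispatch by direct computation'' are not part of the statement at all, since the lemma excludes them.
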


\begin{proof}
    The above inequality can be rewritten as follows:
\begin{equation*}
    \sum_{i=0}^{ak+1}\binom{(2a+1)k+3}{ak+1-i}a^{ak+1-i} < \binom{(2a+1)k+3}{ak+2}a^{ak+1}.
\end{equation*}
If we divide both sides of the above inequality by $\binom{(2a+1)k+3}{ak+1}a^{ak+1}$, then we have
\begin{equation*}
    \sum_{i=0}^{ak+1}\frac{(ak+2-1)\cdots(ak+2-i)}{((a+1)k+2+1)\cdots((a+1)k+2+i)}a^{-i} < \frac{(a+1)k+2}{ak+2},
\end{equation*}
which is equivalent to
\begin{equation*}
    \sum_{i=0}^{ak+1}\Bigg(\frac{1}{a}\cdot\frac{ak+2-1}{(a+1)k+2+1}\Bigg)\cdots\Bigg(\frac{1}{a}\cdot\frac{ak+2-i}{(a+1)k+2+i}\Bigg) < a^{-1}\cdot\Bigg[\frac{a}{1}\cdot\frac{(a+1)k+2}{ak+2}\Bigg].
\end{equation*}
If we set $X_{i}=\frac{1}{a}\cdot\frac{ak+2-i}{(a+1)k+2+i}$ for nonnegative integers $i \in [0, ak+1]$, then we can rewrite the above inequality as follows:
\begin{equation*}
    \sum_{i=0}^{ak+1}X_{1}X_{2}\cdots X_{i} < a^{-1}\cdot X_{0}^{-1}.
\end{equation*}
Observe that for any integer $i$ such that $4< i\leq ak+1$, we have
\begin{equation*}
    X_{i}< X_{4}=\frac{1}{a}\cdot\frac{ak-2}{(a+1)k+6}<\frac{1}{a+1}.
\end{equation*}
Thus,

\begin{align*}
    \sum_{i=0}^{ak+1}X_{1}X_{2}\cdots X_{i}&=1+X_{1}+X_{1}X_{2}+X_{1}X_{2}X_{3}+X_{1}X_{2}X_{3}X_{4}\sum_{i=4}^{ak+1}X_{5}\cdots X_{i}\\
    &<1+X_{1}+X_{1}X_{2}+X_{1}X_{2}X_{3}+X_{1}X_{2}X_{3}X_{4}\sum_{i=4}^{ak+1}\bigg(\frac{1}{a+1}\bigg)^{i-4}\\
    &<1+X_{1}+X_{1}X_{2}+X_{1}X_{2}X_{3}+X_{1}X_{2}X_{3}X_{4}\sum_{i=4}^{\infty}\bigg(\frac{1}{a+1}\bigg)^{i-4}\\
    &=1+X_{1}+X_{1}X_{2}+X_{1}X_{2}X_{3}+X_{1}X_{2}X_{3}X_{4}\cdot\frac{a+1}{a}.
\end{align*}
Thus, if we show the following inequality
\begin{align*}
    &1+X_{1}+X_{1}X_{2}+X_{1}X_{2}X_{3}+X_{1}X_{2}X_{3}X_{4}\cdot\frac{a+1}{a}<a^{-1}\cdot X_{0}^{-1} \\
    &\iff a^{-1}\cdot X_{0}^{-1}-\Big(1+X_{1}+X_{1}X_{2}+X_{1}X_{2}X_{3}+X_{1}X_{2}X_{3}X_{4}\cdot\frac{a+1}{a}\Big)>0,
\end{align*}
then we are done.
If we replace $X_{i}$ by $X_{i}=\frac{1}{a}\cdot\frac{ak+2-i}{(a+1)k+2+i}$ in the above inequality and clear all denominators by multiplying with $a^4(ak+2)\prod_{i=3}^{6}((a+1)k+i)$, we get

\begin{align*}
    &a^6(k^4-2k^3)+a^5(5k^4+3k^3-30k^2)+a^4(10k^4+20k^3-52k^2-148k)\\
    &+a^3(2k^3-44k^2-208k-240)+a^2(5k^3+2k^2+12k)+a(-4k)+(-4k)>0.
\end{align*}

When $a=1$, the left-hand side of the last inequality becomes $16k^4+28k^3-124k^2-352k-240$ and one can check that it is positive for all integers $k\geq 4$ (and it is negative for $k=3$). This proves the claim when $a = 1$.

Assume henceforth that $a \geq 2$ and $k \geq 3$. We estimate the quadratic in $a$ as follows:
\begin{align*}
    a^2(5k^3+2k^2+12k)+a(-4k)+(-4k)&>a^2(5k^3+2k^2+12k)+a^2(-4k)+a^2(-4k)\\
    &=a^2(5k^3+2k^2+4k)\\
    &>0.    
\end{align*}
Hence, it is enough to show that the remaining polynomial in $a$ satisfies
\begin{align*}
    &a^6(k^4-2k^3)+a^5(5k^4+3k^3-30k^2)+a^4(10k^4+20k^3-52k^2-148k)\\
    &+a^3(2k^3-44k^2-208k-240)>0.
\end{align*}
By dividing both sides by $a^3>0$, we see that this is equivalent to
\begin{align*}
    &a^3(k^4-2k^3)+a^2(5k^4+3k^3-30k^2)+a(10k^4+20k^3-52k^2-148k)\\
    &+(2k^3-44k^2-208k-240)>0.
\end{align*}
One can readily check that for  $k\geq3$, the terms $k^4-2k^3$, $5k^4+3k^3-30k^2$, and $10k^4+20k^3-52k^2-148k$ are positive. Hence, using that $a \geq 2$, we get
\begin{align*}
    &a^3(k^4-2k^3)+a^2(5k^4+3k^3-30k^2)+a(10k^4+20k^3-52k^2-148k)\\
    &+(2k^3-44k^2-208k-240)\\
    &\geq8(k^4-2k^3)+4(5k^4+3k^3-30k^2)+2(10k^4+20k^3-52k^2-148k)\\
    &+(2k^3-44k^2-208k-240)\\
    &=48k^4+38k^3-268k^2-504k-240.
\end{align*}
Finally, one can check that $48k^4+38k^3-268k^2-504k-240$ is positive for all integers $k\geq3$.

\end{proof}
\begin{proof}[Proof of Proposition~\ref{prop:firstinequality}] 
As described after~\eqref{eq:prop1_eq}, we split the proof into two cases.

\noindent \underline{\textit{Case 1:}} $\frac{am-(a-1)}{2a+1}$ is not an integer.

In this case, we need to show that the inequality $f_{m,a}(r_{a}-1)<f_{m,a}(r_{a})$ holds for every $m\geq2$ such that $m\not\equiv3 \text{ (mod }2a+1)$. When $m=2$, we have $r_a = 1$, and this case can be easily checked. Clearly $m \neq 3$, and the case when $m\geq4$ follows from Lemma~\ref{lem:case1_lem2} and~\eqref{eq:prop1_eq}.

\noindent \underline{\textit{Case 2:}} $\frac{am-(a-1)}{2a+1}$ is an integer.

In this case, since $m\equiv3 \text{ (mod }2a+1)$, we can write $m=(2a+1)k+3$ for some $k \in \Zge$ and we have $r_{a}=\big\lfloor\frac{a\{(2a+1)k+3\}-(a-1)}{2a+1}\big\rfloor+1=ak+2$. The right-hand side  inequality in~\eqref{eq:prop1_eq} is equivalent to~\eqref{eq:lemma3.5_ineq}. If $k =0, 1, 2, 3$, then
$m = 3, 2a + 4, 4a + 5, 6a + 6$, respectively. The first three cases are excluded by the hypothesis of Proposition~\ref{prop:firstinequality}, and $m = 6a + 6$ is excluded when $a = 1$. The result now
follows from Lemma~\ref{lem:case2_lem1} and~\eqref{eq:prop1_eq}.
\end{proof}

\subsection {Proof of Proposition~\ref{prop:secondinequality}}
We start by rewriting the inequality we need to prove. Namely, note that 
\begin{equation}\label{ineq:equivalent}
    f_{m,a}(r_{a})> f_{m,a}(r_{a}+1) \iff \sum_{i=0}^{r_{a}}\binom{m}{i}a^{i} > \binom{m}{r_{a}+1}a^{r_{a}}.
\end{equation}

To prove this inequality, we need three lemmas. The first lemma states that $f_{m,a}(r)> f_{m,a}(r+1)$ implies $f_{m-1,a}(r)> f_{m-1,a}(r+1)$.

\begin{lemma}
\label{lem:firstineqforsecondprop}
    For $r\in\Zge$, $a\in \RRgt$, and $m\in\Zgt$ such that $r<m$,
    \begin{equation}\label{ineq:Lemma3.6}
\sum_{i=0}^{r}\binom{m}{i}a^i\geq\binom{m}{r+1}a^{r} \implies \sum_{i=0}^{r}\binom{m-1}{i}a^i>\binom{m-1}{r+1}a^{r}.
    \end{equation}
\end{lemma}

\begin{proof}
    By definition of the binomial coefficient, we have
    \begin{equation*}
        \binom{m-1}{i}=\frac{m-i}{m}\binom{m}{i}
    \end{equation*}
    for any integer $i$ such that $0\leq i\leq r$. Thus, if $\sum_{i=0}^{r}\binom{m}{i}a^i\geq\binom{m}{r+1}a^{r}$, then we have

    \begin{align*}
   \sum_{i=0}^{r}\binom{m-1}{i}a^i =\sum_{i=0}^{r}\frac{m-i}{m}\binom{m}{i}a^i > \sum_{i=0}^{r}\frac{m-r-1}{m}\binom{m}{i}a^i &=\frac{m-r-1}{m}\sum_{i=0}^{r}\binom{m}{i}a^i\\
   &\geq\frac{m-r-1}{m}\binom{m}{r+1}a^{r}\\
    &=\binom{m-1}{r+1}a^{r}.
    \end{align*}

\end{proof}
The second lemma states that for sufficiently small $r$, the inequality $f_{m,a}(r)> f_{m,a}(r+1)$ implies $f_{m+2,a}(r+1)> f_{m+2,a}(r+2)$. This is needed when we deal with the case $a > 1$.

\begin{lemma}
\label{lem:secondineqforsecondprop}
    For $r \in \Zge$, $a \in \RRgt$, and $m \in \Zgt$ such that $r\leq\lfloor\frac{m-1}{2}\rfloor$,
    \begin{equation}\label{eq:lemma3.7}
     \sum_{i=0}^{r}\binom{m}{i}a^i \geq \binom{m}{r+1}a^r \implies \sum_{i=0}^{r+1}\binom{m+2}{i}a^i > \binom{m+2}{r+2}a^{r+1}.
    \end{equation}
    \end{lemma}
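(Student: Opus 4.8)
The plan is to first convert the desired strict inequality into an equivalent statement involving only binomial sums for the moduli $m+1$ and $m$, and then to feed in the hypothesis. Throughout write $S_j:=\sum_{i=0}^{j}\binom{m}{i}a^i$ and $S_j':=\sum_{i=0}^{j}\binom{m+1}{i}a^i$.

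\textbf{Reduction.} First I would apply Pascal's rule $\binom{m+2}{i}=\binom{m+1}{i}+\binom{m+1}{i-1}$ on the left of the conclusion and $\binom{m+2}{r+2}=\binom{m+1}{r+2}+\binom{m+1}{r+1}$ on the right. The left side becomes $S_{r+1}'+aS_r'=(1+a)S_r'+\binom{m+1}{r+1}a^{r+1}$, and cancelling $\binom{m+1}{r+1}a^{r+1}$ on both sides shows that the conclusion is equivalent to
\begin{equation*}
(1+a)\sum_{i=0}^{r}\binom{m+1}{i}a^i>\binom{m+1}{r+2}a^{r+1}.\tag{$\star\star$}
\end{equation*}
Applying $\binom{m+1}{i}=\binom{m}{i}+\binom{m}{i-1}$ once more gives $S_r'=S_r+aS_{r-1}$ and $\binom{m+1}{r+2}=\binom{m}{r+2}+\binom{m}{r+1}$, so $(\star\star)$ is the purely modulus-$m$ statement $(1+a)(S_r+aS_{r-1})>\bigl(\binom{m}{r+2}+\binom{m}{r+1}\bigr)a^{r+1}$, which is exactly where the hypothesis $S_r\ge\binom{m}{r+1}a^r$ can be inserted.

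\textbf{A sufficient inequality, and why it is not enough.} The natural move is to bound $S_r$ and $S_{r-1}=S_r-\binom{m}{r}a^r$ from below by the smallest values the hypothesis allows. Since the coefficients $1+a$ and $a$ are positive, $(\star\star)$ would follow from the single polynomial inequality
\begin{equation*}
(1+a+a^2)\binom{m}{r+1}>(a^2+a)\binom{m}{r}+a\binom{m}{r+2}.\tag{$\dagger$}
\end{equation*}
Inequality $(\dagger)$ holds in many cases, but it is \emph{false} once $m$ is large relative to $r$ (the term $-a\binom{m}{r+2}$ dominates), and it can fail even while the hypothesis is satisfied: for $a=3$, $r=2$, $m=5$ one has $S_2=106\ge 90=\binom{5}{3}a^2$, yet the left side of $(\dagger)$ equals $130<135$. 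Thus replacing the whole partial sum by its hypothesis bound is too lossy, because in this regime $(\star\star)$ is true only thanks to the amount by which $S_r$ exceeds $\binom{m}{r+1}a^r$.

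\textbf{Main obstacle and resolution.} The crux is therefore the case in which $(\dagger)$ fails. To handle it I would not discard the partial sum: using the log-concavity of $\{\binom{m}{i}a^i\}_i$ (established earlier in the paper) to control the ratios of consecutive summands, one bounds $S_r$ and $S_{r-1}$ below by more than their leading terms. This is combined with the key structural observation that the hypothesis, together with $r\le\lfloor\tfrac{m-1}{2}\rfloor$, confines $m$ to an explicit bounded range in terms of $r$ and $a$ (because $S_r$ is only a bounded multiple of its top term $\binom{m}{r}a^r$, while $S_r\ge\binom{m}{r+1}a^r=\tfrac{m-r}{r+1}\binom{m}{r}a^r$). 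Inside this window $(\star\star)$ becomes a finite inequality that can be verified directly, while the complementary case where $(\dagger)$ already holds is immediate. I expect this quantitative step to be the hardest part: the region where the hypothesis holds and the region where the crude inequality $(\dagger)$ holds nearly coincide, so one is forced to estimate the partial sum beyond its leading term precisely in the range $m\approx 2r$, rather than collapsing it to the single bound $S_r\ge\binom{m}{r+1}a^r$.
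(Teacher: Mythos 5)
Your Pascal-rule reduction to $(\star\star)$ is correct, and your diagnosis that the naive substitution $S_r\ge\binom{m}{r+1}a^r$, $S_{r-1}\ge\bigl(\binom{m}{r+1}-\binom{m}{r}\bigr)a^r$ is too lossy is also correct (your counterexample $a=3$, $r=2$, $m=5$ checks out). But the proof stops there: the final paragraph is a plan, not an argument. You never actually produce the improved lower bounds on $S_r$ and $S_{r-1}$, and the claim that the hypothesis confines $m$ to a window inside which $(\star\star)$ ``becomes a finite inequality that can be verified directly'' does not hold up: the window depends on both $r$ and $a$, which are unbounded, so there is no finite verification. You would still need a uniform estimate valid for all $r$ and $a$ in the critical regime $m\approx 2r$, and that estimate is exactly the missing content. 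As written, the lemma is not proved.

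The gap is avoidable because the additive (Pascal) decomposition is the wrong tool here. The paper compares $\binom{m+2}{i+1}$ to $\binom{m}{i}$ multiplicatively: $\binom{m+2}{i+1}=\frac{(m+2)(m+1)}{(i+1)(m-i+1)}\binom{m}{i}\ge\frac{(m+2)(m+1)}{(r+2)(m-r)}\binom{m}{i}$ for $0\le i\le r$, where the last inequality is precisely where $r\le\lfloor\frac{m-1}{2}\rfloor$ enters (it guarantees $(i+1)(m-i+1)\le(r+2)(m-r)$ on this range). Dropping the $i=0$ term and shifting the index then gives $\sum_{i=0}^{r+1}\binom{m+2}{i}a^i>\frac{(m+2)(m+1)}{(r+2)(m-r)}\sum_{i=0}^{r}\binom{m}{i}a^{i+1}\ge\frac{(m+2)(m+1)}{(r+2)(m-r)}\binom{m}{r+1}a^{r+1}=\binom{m+2}{r+2}a^{r+1}$, so the hypothesis transfers with no loss at all. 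If you want to salvage your route, you must quantify how much $S_r$ exceeds $\binom{m}{r+1}a^r$ whenever $(\dagger)$ fails; the paper's one-line ratio bound makes that unnecessary.
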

\begin{proof} The quadratic $i \mapsto  (m-i+1)(i+1)$ has zeroes at $i =-1$ and $i = m+1$, and hence has a maximum at the midpoint $\frac{-1+(m+1)}{2} = \frac{m}{2}$. So, for $-1 \leq i \leq r \leq \frac{m-1}{2}$, we have
$(m-i+1)(i+1) \leq (m-r+1)(r+1)$. However, $(m-r+1)(r+1) \leq (m-r)(r+2)$ since $2r+1 \leq m$. Therefore, for any integers $i,m,$ and $r$ such that $0\leq i\leq r\leq\lfloor\frac{m-1}{2}\rfloor$, we have
    \begin{equation*}
        \binom{m+2}{i+1}=\frac{(m+2)(m+1)}{(m-i+1)(i+1)}\binom{m}{i}\geq\frac{(m+2)(m+1)}{(m-r)(r+2)}\binom{m}{i}.
    \end{equation*}
    Using this inequality, if $\sum_{i=0}^{r}\binom{m}{i}a^i \geq \binom{m}{r+1}a^r$, then we have

    \begin{align*}
        \sum_{i=0}^{r+1}\binom{m+2}{i}a^i &>\sum_{i=1}^{r+1}\binom{m+2}{i}a^i =\sum_{i=0}^{r}\binom{m+2}{i+1}a^{i+1}
        \geq\frac{(m+2)(m+1)}{(m-r)(r+2)}\sum_{i=0}^{r}\binom{m}{i}a^{i+1} \\
        &\geq\frac{(m+2)(m+1)}{(m-r)(r+2)}\binom{m}{r+1}a^{r+1}
        =\binom{m+2}{r+2}a^{r+1}.
    \end{align*}
\end{proof}

The following lemma will serve as a base case of our inductive argument in the proof of Proposition \ref{prop:secondinequality}.

\begin{lemma} \label{lem:longlemma}
    For $a \in \Zgt$ and $k \in \Zge$,
    \begin{equation}\label{ineq:longlemma}
        \sum_{i=0}^{ak+2}\binom{(2a+1)k+5}{i}a^i>\binom{(2a+1)k+5}{ak+3}a^{ak+2}.
    \end{equation}
\end{lemma}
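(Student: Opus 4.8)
The plan is to follow the normalization strategy of \cref{lem:case2_lem1}, adapted to the fact that \eqref{ineq:longlemma} is an inequality in the opposite direction. Write $m = (2a+1)l+5$ and $c_i = \binom{m}{i}a^i$. The largest term on the left of \eqref{ineq:longlemma} is $c_{al+2}$, and since $\binom{m}{al+3}a^{al+2} = \frac{(a+1)l+3}{al+3}c_{al+2}$, subtracting $c_{al+2}$ from both sides shows that \eqref{ineq:longlemma} is equivalent to the cleaner inequality
\[
  \sum_{i=0}^{al+1}\binom{m}{i}a^i > \frac{l}{al+3}\binom{m}{al+2}a^{al+2}.
\]
Dividing by $c_{al+2}$ and reversing the order of summation, this becomes $\sum_{j=1}^{al+2}P_j > \frac{l}{al+3}$, where $P_j = \prod_{t=1}^{j}Y_t$ and $Y_t = \frac{1}{a}\cdot\frac{al+3-t}{(a+1)l+3+t}$ are exactly the products appearing in \cref{lem:case2_lem1}. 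First I would dispose of the small values $l=0,1$ (and, if needed, small $a$) by direct computation; for $l=0$ the left side of \eqref{ineq:longlemma} is $1+ma+\binom{m}{2}a^2$ and the claim reduces to $1+5a>0$.

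For the main range the obstacle is that we now need a \emph{lower} bound on $\sum_j P_j$, so the geometric tail estimate used for the upper bound in \cref{lem:case2_lem1} runs the wrong way: there one bounds the tail ratios above by their limit $\frac{1}{a+1}$ and sums to infinity, over-estimating a quantity that is harmless because it is suppressed by the small factor $P_4$; the same substitution here would over-estimate the quantity we are trying to bound below. Instead I would use the exact identity $Y_t = \frac{1}{a+1}\big(1-\frac{(2a+1)t-3}{at+a(a+1)l+3a}\big)$, bound the product from below by $P_j \ge (a+1)^{-j}\big(1-\sum_{t=1}^{j}\frac{(2a+1)t-3}{at+a(a+1)l+3a}\big)$ via the Weierstrass inequality, and, after bounding the denominators, sum the resulting explicit geometric series. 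This reduces the claim to a polynomial inequality in $a$ and $l$, to be verified for all $l$ beyond the explicitly checked small cases, in the spirit of the final step of \cref{lem:case2_lem1}.

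The hard part will be the delicacy of this lower bound. Asymptotically both sides of $\sum_{j\ge1}P_j > \frac{l}{al+3}$ tend to $\frac{1}{a}$, so the inequality is tight to leading order, with margin only $O(1/l)$; consequently any estimate that is not second-order accurate (dropping the tail, or replacing the ratios by $\frac{1}{a+1}$) loses too much and even yields the wrong sign. The values $a\in\{1,2,3\}$ are the most demanding, since the second-order correction constants are largest there, and they likely require the sharp form of the bound on $Y_t$ above (keeping the $+3$ and the full denominator) rather than a crude majorant. One cannot sidestep this by reducing to smaller instances through \cref{lem:firstineqforsecondprop} or \cref{lem:secondineqforsecondprop}: since $al+2 = r_a\big((2a+1)l+5\big)$, \eqref{ineq:longlemma} is itself a peak-boundary inequality, and the only instance it would reduce to under \cref{lem:secondineqforsecondprop} is precisely the one asserted false (in the reverse direction) by \cref{lem:case2_lem1}. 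An appealing alternative, which avoids the summation entirely, is to recognize the normalized sum as a terminating ${}_2F_1$ and, via a Pfaff transformation, rewrite \eqref{ineq:longlemma} as the incomplete-Beta inequality $\int_0^{1/(a+1)}t^{(a+1)l+2}(1-t)^{al+2}\,dt > \frac{a^{al+2}}{(al+3)(a+1)^{m}}$, which can then be attacked analytically.
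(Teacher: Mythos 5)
Your reformulation is correct (the reduction to $\sum_{j=1}^{al+2}P_j>\tfrac{l}{al+3}$, the $l=0$ check, and the incomplete-Beta identity all check out), and you have correctly diagnosed why this route is hard: both sides tend to $\tfrac1a$ and the margin is only $O(1/l)$. But the proof is not closed, and the specific estimates you propose do not survive the delicacy you yourself flag. The Weierstrass bound $P_j\ge (a+1)^{-j}\bigl(1-\sum_{t\le j}x_t\bigr)$ with $x_t=\frac{(2a+1)t-3}{a(a+1)l+3a+at}$ only yields an explicit geometric series after you freeze the denominator; doing so at its minimum $t=1$ gives, after summing, a loss of $\frac{(a+1)(2a^2+1)}{a^4((a+1)l+4)}$ against an available margin of $\frac{3}{a(al+3)}$, and for $a=1$ the leading constants are $3/(2l+4)\cdot 2=3/(l+2)$ versus $3/(l+3)$ --- the wrong way around. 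So for $a=1$ (and borderline for $a=2,3$) you must retain the $t$-dependence of the denominators, at which point the sums are no longer geometric and the promised ``polynomial inequality in $a$ and $l$'' does not materialize. In short, the plan identifies the obstruction but does not overcome it; the ${}_2F_1$/incomplete-Beta reformulation is likewise only a restatement. This is a genuine gap.

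The paper avoids the tightness problem entirely by never estimating the sum from below. Via \cref{prop:equivalent} it peels off the top $n$ terms of the sum \emph{exactly}, recording the deficit as $\frac{P_n(a,l)}{Q_n(a,l)}\binom{(2a+1)l+5}{al+3-n}a^{al+2-n}$ for recursively defined polynomials $P_n,Q_n$; the inequality~\eqref{ineq:longlemma} then follows for free once one exhibits a single $n$ (namely $n=l+1$) for which $P_n(a,l)<0$ while $Q_n(a,l)>0$, since the truncated sum on the left of~\eqref{ineq:goal3} is positive. The technical weight is shifted to \cref{lem:Pnnminusone,lem:pnipositive}, which show that all non-leading coefficients of $P_n(a,l)$ in $l$ are positive when subtracted and that the subleading one is at least $a^{2n-2}$, whence $P_{l+1}(a,l)<0$ for $l\ge2$ because $l\le a^{l-1}+\cdots$. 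If you want to salvage your approach, you would need either a second-order-sharp closed-form lower bound for $\sum_j P_j$ valid down to $a=1$, or to switch to an exact telescoping of the top terms as the paper does.
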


The proof of Lemma~\ref{lem:longlemma}  takes several steps, and we postpone it till the next section. Here, we show how the last three lemmas imply Proposition~\ref{prop:secondinequality}.

\begin{proof}[Proof of Proposition~\ref{prop:secondinequality} subject to Lemma~\ref{lem:longlemma}] Recall that we rephrased Proposition~\ref{prop:secondinequality} as the inequality~\eqref{ineq:equivalent}. When $m=2$, we have $r_{a}=1$ and the inequality~\eqref{ineq:equivalent}  can be easily checked. 

Now we verify~\eqref{ineq:equivalent} for $m\geq3$. The set $\Zge$ can be partitioned into classes parameterized by the quotient when dividing by $2a+1$. More precisely, if we let $J_{k}:=\{(2a+1)k+l\;|\;0\leq l\leq2a\}$ for nonnegative integers $k$, then $\Zge=\bigsqcup_{k\geq0}J_{k}$.

If we set $I_{k}:=J_{k}+\{3\}=\{j + 3 \;|\; j \in J_k\} = \{(2a+1)k+l\;|\;3\leq l\leq2a+3\}$, then the the collection $\{I_{k}\}_{k\geq0}$ forms a partition of $\mathbb{Z}_{\geq3}$, i.e., $\mathbb{Z}_{\geq3} = \bigsqcup_{k\geq0}I_{k}$. We use a combination of applications of Lemma~\ref{lem:firstineqforsecondprop} and Lemma~\ref{lem:secondineqforsecondprop} to show that~\eqref{ineq:equivalent} holds for each element of $I_k$, $k\geq 0$. See Figure~\ref{fig:Implication} for an illustration of how these lemmas are applied.

\begin{figure}[h!]
    \centering
    \includegraphics[width=1\textwidth]{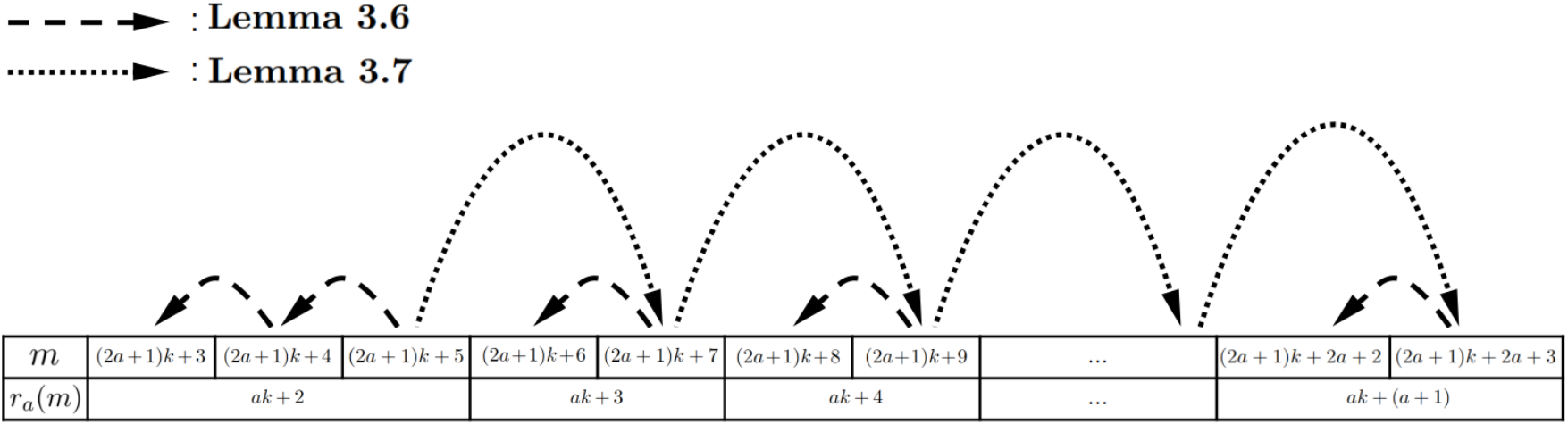}
    \caption{An illustration of the logic of the proof of Proposition~\ref{prop:secondinequality}.}
    \label{fig:Implication}
\end{figure}

In order to apply the two lemmas, we make some observations about the values of $r_{a}$ for each element  $m \in I_{k}$. Suppose that $m = (2a + 1)k + l$. For $l=3$, we have $m = (2a+1)k+3$ and
\begin{equation*}
r_a(m) = \Big\lfloor \frac{am-(a-1)}{2a+1}\Big\rfloor + 1 =\lfloor ak+1\rfloor+1=ak+2.
\end{equation*}
On the other hand, for each $3<l\leq 2a+3$, $m = (2a+1)k+l$ and we have
\begin{equation*}
am - (a-1) = (2a+1)(ak-1+ \Big\lfloor \frac{l}{2} \Big\rfloor) + R,
\end{equation*}
where 
\begin{equation*}
    R=\begin{cases}
			(a+2)-\frac{l}{2}, & \text{if $l$ is even}\\
            (2a+2)-\frac{l-1}{2}, & \text{if $l$ is odd.}
		 \end{cases}
\end{equation*}
If $l$ is even, then $l \geq 4$, and \[0\leq\Big\lfloor \frac{R}{2a+1}\Big\rfloor = \Big\lfloor \frac{(a+2)-\frac{l}{2}} {2a+1} \Big\rfloor \leq  \Big\lfloor \frac{(a+2)-2} {2a+1} \Big\rfloor  = 0.\]
If $l$ is odd, then $l \geq 5$, and \[0\leq\Big\lfloor \frac{R}{2a+1}\Big\rfloor = \Big\lfloor \frac{(2a+2)-\frac{l-1}{2}} {2a+1} \Big\rfloor \leq  \Big\lfloor \frac{(2a+2)-2} {2a+1} \Big\rfloor  = 0.\]
Thus, $\lfloor \frac{R}{2a+1}\rfloor=0$ and
\begin{equation*}
r_a(m) = \Big\lfloor \frac{am-(a-1)}{2a+1}\Big\rfloor + 1 = (ak-1+ \Big\lfloor \frac{l}{2} \Big\rfloor) + \Big\lfloor \frac{R}{2a+1}\Big\rfloor +1 = ak + \Big\lfloor \frac{l}{2} \Big\rfloor. 
\end{equation*}
The values $r_a(m)$ are illustrated in Figure~\ref{fig:Implication}. In particular, for $l\in\{5,7,\dots,2a+1\}$, one can see that \[ r_a(m) = ak+\Big\lfloor\frac{l}{2}\Big\rfloor = ak+\Big\lfloor\frac{l-1}{2}\Big\rfloor \leq ak+\Big\lfloor\frac{k+l-1}{2}\Big\rfloor=\Big\lfloor\frac{(2a+1)k+l-1}{2}\Big\rfloor = \Big\lfloor\frac{m-1}{2}\Big\rfloor,\]
$r_{a}(m+2)=r_{a}(m)+1$, and $r_{a}(m)=r_{a}(m-1)$.

Note that $|I_k| = 2a + 1 \geq 3$. Lemma~\ref{lem:longlemma} shows that~\eqref{ineq:equivalent} holds for the third element in each $I_{k}$,  namely for $m=(2a+1)k+5 \in I_{k}$. Then, an inductive argument using Lemma~\ref{lem:secondineqforsecondprop} shows that~\eqref{ineq:equivalent} holds for \[m \in \{(2a+1)k+5, (2a+1)k+7, \dots,(2a+1)k+2a+3\} \subseteq I_k.\]  Applying Lemma~\ref{lem:firstineqforsecondprop} for each of these values of $m$, noting that $r_a(m) = r_a(m-1)$, we conclude that~\eqref{ineq:equivalent} holds for \[m \in \{(2a+1)k+4, (2a+1)k+6, \dots,(2a+1)k+2a+2\} \subseteq I_k.\] Finally, one more application of Lemma~\ref{lem:firstineqforsecondprop} for $m = (2a+1)k+4$ implies that~\eqref{ineq:equivalent} holds with $m = (2a+1)k+3$.
\end{proof}


\section{Proof of Lemma~\ref{lem:longlemma}} \label{sec:long prop}

To prove Lemma~\ref{lem:longlemma}, we introduce two families of polynomials $P_{i}(a,k)$ and $Q_{i}(a,k)$ as follows:

\begin{itemize}
    \item $P_{0}(a,k)=Q_{0}(a,k)\coloneqq1$.
    \item $Q_{n+1}(a,k)\coloneqq (ak+3-n)Q_{n}(a,k)$ for $n \in \mathbb{Z}_{\geq 0}$.
    \item $P_{n+1}(a,k)\coloneqq a((a+1)k+3+n)P_{n}(a,k)-aQ_{n+1}(a,k)$ for $n \in \mathbb{Z}_{\geq 0}$.
\end{itemize}
The polynomials $P_{n}(a,k)$ and $Q_{n}(a,k)$ for $i=0,1,\dots,6$ are shown in Table~\ref{table1}.
\begin{table} [h!]
\begin{tabular}
{ | m{0.3cm} | m{10cm}| m{5cm} | } 
  \hline
  $n$ & $P_{n}(a,k)$ & $Q_{n}(a,k)$ \\ 
  \hline
  $0$ & $1$ & $1$ \\ 
  \hline
  $1$ & $ak$ & $ak+3$ \\
  \hline
  $2$ & $a^2k^2-a^2k-6a$ & $a^2k^2+5ak+6$ \\ 
  \hline
  $3$ & $a^3k^3-(a^4+2a^3)k^2-(11a^3+17a^2)k-(30a^2+6a)$ & $a^3k^3+6a^2k^2+11ak+6$ \\ 
  \hline
  $4$ & $a^4k^4-(a^6+3a^5+2a^4)k^3-(17a^5+40a^4+28a^3)k^2-(96a^4+138a^3+12a^2)k-(180a^3+36a^2)$ & $a^4k^4+6a^3k^3+11a^2k^2+6ak$ \\ 
  \hline
  $5$ & $a^5k^5-(a^8+4a^7+5a^6)k^4-(24a^7+78a^6+82a^5+33a^4)k^3-(215a^6+514a^5+346a^4+7a^3)k^2-(852a^5+1182a^4+120a^3-6a^2)k-(1260a^4+252a^3)$ & $a^5k^5+5a^4k^4+5a^3k^3-5a^2k^2-6ak$ \\ 
  \hline
  $6$ & $a^6k^6 - (a^{10} + 5a^9 + 9a^8 + 5a^7 - 5a^6)k^5 - (32a^9 + 134a^8 + 200a^7 + 115a^6 + 28a^5)k^4 - (407a^8 + 1353a^7 + 1516a^6 + 617a^5 - 8a^4)k^3 - (2572a^7 + 6146a^6 + 4070a^5 + 170a^4 - 2a^3)k^2 - (8076a^6 + 10968a^5 + 1212a^4 - 48a^3 + 12a^2)k - (10080a^5 + 2016a^4)
$ & $a^6k^6+3a^5k^5-5a^4k^4-15a^3k^3+4a^2k^2+12ak$ \\ 
  \hline
\end{tabular}
    \caption{The first few polynomials $P_{n}(a,k)$ and $Q_{n}(a,k)$.}
    \label{table1}
\end{table}
Our motivation for introducing the polynomials $P_{n}(a,k)$ and $Q_{n}(a,k)$ is the following fact.
\begin{prop}\label{prop:equivalent}
For $a \in \Zgt$, $k \in \Zge$, and an integer $n$ such that $0\leq n\leq ak+2$, the inequality \eqref{ineq:longlemma} from Lemma~\ref{lem:longlemma} is equivalent to the following inequality:
    \begin{equation}\label{ineq:goal3}
        \sum_{i=0}^{ak+2-n}\binom{(2a+1)k+5}{i}a^i>\frac{P_{n}(a,k)}{Q_{n}(a,k)}\binom{(2a+1)k+5}{ak+3-n}a^{ak+2-n}.
    \end{equation}
\end{prop}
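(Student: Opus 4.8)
The plan is to prove \cref{prop:equivalent} by induction on $n$, establishing that each increment of $n$ transforms the target inequality into an equivalent one, so that the chain of equivalences connects the case $n=0$ (which is exactly \eqref{ineq:longlemma}, since $P_0=Q_0=1$) to the general case. First I would verify the base case: setting $n=0$ and using $P_0(a,l)=Q_0(a,l)=1$, the right-hand side of \eqref{ineq:goal3} becomes $\binom{(2a+1)l+5}{al+3}a^{al+2}$, so \eqref{ineq:goal3} reads exactly as \eqref{ineq:longlemma}. This reduces the proposition to showing that the inequality at level $n$ is equivalent to the inequality at level $n+1$.

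The heart of the argument is the inductive step. I would isolate the top summand on the left of \eqref{ineq:goal3}: write
\begin{equation*}
    \sum_{i=0}^{al+2-n}\binom{(2a+1)l+5}{i}a^i = \binom{(2a+1)l+5}{al+2-n}a^{al+2-n} + \sum_{i=0}^{al+1-n}\binom{(2a+1)l+5}{i}a^i.
\end{equation*}
Then I would move this top term to the right-hand side. The key is to compute the ratio of consecutive binomial coefficients,
\begin{equation*}
    \frac{\binom{(2a+1)l+5}{al+3-n}}{\binom{(2a+1)l+5}{al+2-n}} = \frac{al+3-n}{(a+1)l+3+n},
\end{equation*}
(here I am using $((2a+1)l+5)-(al+2-n) = (a+1)l+3+n$), so that the two terms on the right of the rearranged inequality can be combined over a common factor. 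Factoring out $\binom{(2a+1)l+5}{al+2-n}a^{al+1-n}$ from the right-hand side should produce precisely the combination $a\big((a+1)l+3+n\big)P_n - a\,Q_{n+1}$ divided by $Q_{n+1}$, which is by definition $P_{n+1}(a,l)/Q_{n+1}(a,l)$. This is where the recursions for $P_{n+1}$ and $Q_{n+1}$ earn their keep: they are engineered exactly so that one step of this rearrangement reproduces the same shape of inequality with $n$ replaced by $n+1$.

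The main obstacle I anticipate is the bookkeeping in the rearrangement, specifically confirming that the direction of the inequality is preserved at each step. Dividing both sides by the positive quantity $\binom{(2a+1)l+5}{al+2-n}a^{al+1-n}$ is harmless, but I must check that the coefficients $Q_n(a,l)$ stay positive throughout the relevant range $0\le n\le al+2$, since the inequality is stated with $P_n/Q_n$ and a sign flip in $Q_n$ would reverse it. From the product formula $Q_{n+1}=\prod_{i=0}^{n}(al+3-i)$, the factors $al+3-i$ for $0\le i\le n\le al+2$ satisfy $al+3-i \ge al+3-(al+2) = 1 > 0$, so every $Q_n$ in range is a product of positive terms and hence positive; this is the one genuine check needed. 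With positivity secured, each equivalence $\eqref{ineq:goal3}_n \iff \eqref{ineq:goal3}_{n+1}$ follows by the algebraic identity coming from the two recursions, and composing these equivalences from $n=0$ upward yields the claim for every $n$ in the stated range.
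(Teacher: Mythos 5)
Your proposal is correct and follows essentially the same route as the paper: the paper proves by induction on $n$ the identity $\binom{(2a+1)l+5}{al+3}a^{al+2}-\sum_{i=al+3-n}^{al+2}\binom{(2a+1)l+5}{i}a^i=\frac{P_{n}(a,l)}{Q_{n}(a,l)}\binom{(2a+1)l+5}{al+3-n}a^{al+2-n}$, which is exactly your one-step peeling of the top summand composed $n$ times, with the recurrences for $P_{n+1}$ and $Q_{n+1}$ used precisely as you describe (and your positivity check on $Q_n$ is the right hygiene, though since each step is a subtraction of the same term from both sides, it is only needed to ensure the fractions are well defined). One minor slip: the ratio of consecutive binomial coefficients is $\binom{(2a+1)l+5}{al+3-n}\big/\binom{(2a+1)l+5}{al+2-n}=\frac{(a+1)l+3+n}{al+3-n}$, the reciprocal of what you wrote, but your subsequent identification of the numerator as $a\big((a+1)l+3+n\big)P_n-aQ_{n+1}$ is consistent with the correct ratio, so the argument goes through.
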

\begin{proof}
Adding the inequality~\eqref{ineq:longlemma} to the equality~\eqref{eq:equivalent} below gives the inequality~\eqref{ineq:goal3} above. Hence to show the inequality~\eqref{ineq:longlemma} is equivalent to the inequality~\eqref{ineq:goal3}, it is enough to show that
\begin{equation}\label{eq:equivalent}
     \displaystyle\binom{(2a+1)k+5}{ak+3}a^{ak+2}-\sum_{i=ak+2-(n-1)}^{ak+2}\binom{(2a+1)k+5}{i}a^i=\frac{P_{n}(a,k)}{Q_{n}(a,k)}\binom{(2a+1)k+5}{ak+3-n}a^{ak+2-n}
\end{equation}
holds for any integer $0\leq n\leq ak+2$. We give an inductive proof of \eqref{eq:equivalent}. It is clearly true when $n=0$ because the sum on the left-hand side of \eqref{eq:equivalent} is empty and $P_{0}(a,k)=Q_{0}(a,k)=1$. Suppose \eqref{eq:equivalent} holds for some nonnegative integer $n<ak+2$. Then, using the induction hypothesis and the recurrences for $P_{n+1}(a,k)$ and $Q_{n+1}(a,k)$, we have
    \begin{align*}
        \displaystyle&\binom{(2a+1)k+5}{ak+3}a^{ak+2}-\sum_{i=ak+2-n}^{ak+2}\binom{(2a+1)k+5}{i}a^i\\
        &=\frac{P_{n}(a,k)}{Q_{n}(a,k)}\binom{(2a+1)k+5}{ak+3-n}a^{ak+2-n}-\binom{(2a+1)k+5}{ak+2-n}a^{ak+2-n}\\
        &=\frac{P_{n}(a,k)}{Q_{n}(a,k)}\frac{(a+1)k+3+n}{ak+3-n}\binom{(2a+1)k+5}{ak+2-n}a^{ak+2-n}-\binom{(2a+1)k+5}{ak+2-n}a^{ak+2-n}\\
        &=\frac{P_{n}(a,k)((a+1)k+3+n)-Q_{n}(a,k)(ak+3-n)}{Q_{n}(a,k)(ak+3-n)}\binom{(2a+1)k+5}{ak+2-n}a^{ak+2-n}\\
        &=\frac{a((a+1)k+3+n)P_{n}(a,k)-aQ_{n+1}(a,k)}{Q_{n+1}(a,k)}\binom{(2a+1)k+5}{ak+2-n}a^{ak+2-(n+1)}\\
        &=\frac{P_{n+1}(a,k)}{Q_{n+1}(a,k)}\binom{(2a+1)k+5}{ak+3-(n+1)}a^{ak+2-(n+1)}.
    \end{align*}
Hence, the claim holds for $n+1$.

\end{proof}

$P_{n}(a,k)$ and $Q_{n}(a,k)$ are polynomials in two variables $a$ and $k$, but we regard them as polynomials in a variable $k$ with coefficients in $\mathbb{Z}[a]$. One can easily check that for every nonnegative integer $n$, the polynomials $P_{n}(a,k)$ and $Q_{n}(a,k)$ have degree $n$ with leading coefficients $a^n$. We write these polynomials in descending order of exponents in a variable $k$ as follows:

\begin{equation*}
    P_{n}(a,k)=a^{n}k^{n}-\sum_{i=0}^{n-1}p_{n,i}(a)k^{i} \text{   and   } Q_{n}(a,k)=a^{n}k^{n}+\sum_{i=0}^{n-1}q_{n,i}(a)k^{i}.
\end{equation*}

As can be seen from Table~\ref{table1}, for $i \in \{2,3,4,5,6\}$, all coefficients of $k$ in $P_{i}(a,k)$  but the leading one are negative for $a \in \mathbb{R}_{\geq1}$. We prove this property is true for all $i \geq 2$ and then use this to prove Lemma~\ref{lem:longlemma}. We note that Glasby and Paseman~\cite{glasby2022maximum} also considered the quotient $P_{i}(a,k)/Q_{i}(a,k)$ for $a=1$, but in that case $P_{i}(a,k)$ and $Q_{i}(a,k)$ have common factors and they were able to completely characterize the signs of the coefficients in the numerator and the denominator after reduction. Such reduction is not possible for general $a$, and we had to find different bounds for the coefficients $p_{n,i}(a)$.

We now turn our focus on the properties of the coefficients $p_{n,i}(a)$ and $q_{n,i}(a)$. As a consequence of the definition of the polynomial $Q_{n+1}(a,k)$, we have $\displaystyle Q_{n+1}(a,k)=\prod_{i=0}^{n}(ak+3-i)$. Thus,
\begin{equation*}
    q_{n+1,0}(a)=\prod_{i=0}^{n}(3-i)
\end{equation*}
and
\begin{equation}\label{formula:qnplusoneq}
    q_{n+1,n}(a)=\sum_{i=0}^{n}(3-i)a^{n}=\frac{(6-n)(n+1)}{2}a^{n}.
\end{equation}
Furthermore, if $n\geq 3$,
\begin{equation*}
    q_{n+1,1}(a)=a\cdot\prod_{0\leq i\leq n, i\neq3}(3-i)=6a\cdot(-1)^{n-3}\cdot(n-3)!.
\end{equation*}
Also, since  $Q_{n+1}(a,k) = (ak+3-n)Q_{n}(a,k)$,   we have that for $1 \leq j \leq n-1$,
\begin{equation}\label{eq:recurrdegQ}
    q_{n+1,j}(a)=aq_{n,j-1}(a)+(3-n)q_{n,j}(a).
\end{equation}
Similarly, using the defining recurrence $P_{n+1}(a,k)=a((a+1)k+3+n)P_{n}(a,k)-aQ_{n+1}(a,k)$ and $\displaystyle q_{n+1,0}(a)=\prod_{i=0}^{n}(3-i)$, we have $p_{0,0}(a)=1$,
 $p_{1,0}(a)=0$, $p_{2,0}(a)=6a$, $p_{3,0}(a)=30a^{2}+6a$, and
\begin{equation*}
    p_{n+1,0}(a)=a(n+3)p_{n,0}(a) \text{ for } n\geq 3.
\end{equation*}
Thus, for any $n\geq2$,
\begin{equation*}
    p_{n+1,0}(a)=a^{n-2}\frac{(n+3)!}{5!}(30a^2+6a).
\end{equation*}
Using the above recurrence for $P_{n+1}(a,k)$ and $q_{n+1,n}(a)=\frac{(6-n)(n+1)}{2}a^{n}$, we have
\begin{equation}\label{eq:Pnplusonen}
    p_{n+1,n}(a)=a(a+1)p_{n,n-1}(a)-\frac{n(n-3)}{2}a^{n+1}.
\end{equation}
Lastly, by comparing the coefficients of $k^{j}$ for $1\leq j\leq n-1$ in the defining recurrence for $P_{n+1}(a,k)$, we have
\begin{equation}
\begin{aligned}\label{eq:longrecurr}
    p_{n+1,j}(a)&=a(a+1)p_{n,j-1}(a)+a(3+n)p_{n,j}(a)+aq_{n+1,j}(a)\\
    &=a(a+1)p_{n,j-1}(a)+a(3+n)p_{n,j}(a)+a^2q_{n,j-1}(a)+a(3-n)q_{n,j}(a).
\end{aligned}
\end{equation}

Using the above equations, we prove the following two propositions that give us lower bounds for the coefficients of $P_{n}(a,k)$.

\begin{prop}\label{lem:Pnnminusone}
    For any real number $a\geq1$ and an integer $n\geq3$,
    \begin{equation}\label{formula:Pnnminusone}
        p_{n,n-1}(a)\geq a^{2n-2}+(n-1)a^{2n-3}+\frac{n(n-3)}{2}a^{2n-4}.
    \end{equation}
\end{prop}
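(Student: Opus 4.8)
The plan is to prove \eqref{formula:Pnnminusone} by induction on $n$, driven entirely by the recurrence \eqref{eq:Pnplusonen},
\[
p_{n+1,n}(a)=a(a+1)p_{n,n-1}(a)-\frac{n(n-3)}{2}a^{n+1},
\]
which expresses the subleading coefficient of $P_{n+1}$ in terms of the subleading coefficient of $P_n$. For the base case $n=3$ I would read off $p_{3,2}(a)=a^{4}+2a^{3}$ from Table~\ref{table1}, and note that the right-hand side of \eqref{formula:Pnnminusone} at $n=3$ is $a^{4}+2a^{3}+\tfrac{3\cdot 0}{2}a^{2}=a^{4}+2a^{3}$, so that equality in fact holds at the base.

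For the inductive step, assume \eqref{formula:Pnnminusone} for some $n\geq 3$. I would substitute the induction hypothesis into \eqref{eq:Pnplusonen}, expand $a(a+1)=a^{2}+a$, and collect by powers of $a$. Each of the three hypothesized terms $a^{2n-2}$, $(n-1)a^{2n-3}$, $\tfrac{n(n-3)}{2}a^{2n-4}$ spreads across two consecutive powers, and the bookkeeping yields coefficient $1$ on $a^{2n}$, coefficient $(n-1)+1=n$ on $a^{2n-1}$, and coefficient $(n-1)+\tfrac{n(n-3)}{2}=\tfrac{(n+1)(n-2)}{2}$ on $a^{2n-2}$. These are precisely the three leading coefficients required by \eqref{formula:Pnnminusone} with $n$ replaced by $n+1$. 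The only residue is the pair $\tfrac{n(n-3)}{2}a^{2n-3}$ from the expansion together with the subtracted $\tfrac{n(n-3)}{2}a^{n+1}$, so the substitution gives
\[
p_{n+1,n}(a)\ge a^{2n}+na^{2n-1}+\frac{(n+1)(n-2)}{2}a^{2n-2}+\frac{n(n-3)}{2}\bigl(a^{2n-3}-a^{n+1}\bigr).
\]

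It then remains to show the residue $\tfrac{n(n-3)}{2}\bigl(a^{2n-3}-a^{n+1}\bigr)$ is nonnegative, and this is exactly where the hypothesis $a\geq 1$ is used. Here I would split on $n$: for $n=3$ the factor $n(n-3)$ vanishes; for $n=4$ the exponents coincide since $2n-3=n+1=5$; and for $n\geq 5$ one has $2n-3>n+1$, so $a\geq 1$ forces $a^{2n-3}\geq a^{n+1}$ while $n(n-3)>0$. In every case the residue is $\geq 0$, which closes the induction. The argument is a clean telescoping induction, so I do not expect a real obstacle; the single delicate point is the exponent comparison $2n-3$ versus $n+1$ together with the vanishing of $n(n-3)$ at $n=3,4$, since without tracking these boundary cases one might wrongly worry that $a^{2n-3}-a^{n+1}$ could be negative. (As a consistency check, equality in \eqref{formula:Pnnminusone} holds for $n=3,4,5$ and the bound first becomes strict for $a>1$ at $n=6$, matching the fact that the residue first turns positive when stepping from $n=5$ to $n=6$.)
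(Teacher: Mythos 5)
Your proposal is correct and follows essentially the same route as the paper: induction from the base case $p_{3,2}(a)=a^{4}+2a^{3}$, substitution of the hypothesis into the recurrence \eqref{eq:Pnplusonen}, and disposal of the residue $\tfrac{n(n-3)}{2}\bigl(a^{2n-3}-a^{n+1}\bigr)$, which the paper writes as $\tfrac{k(k-3)}{2}(a^{k-4}-1)a^{k+1}\geq 0$. Your explicit case split at $n=3,4$ versus $n\geq 5$ just makes visible what the paper leaves implicit in that factored form.
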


\begin{proof}
    We use induction on $n$. When $n=3$, $p_{3,2}(a)=a^{4}+2a^{3}$, hence the claim holds. Suppose the claim is true for all $a\geq1$ and an integer $n\geq3$. Then, by the recurrence \eqref{eq:Pnplusonen} and the induction hypothesis, we have
    \begin{align*}
        p_{n+1,n}(a)&=a(a+1)p_{n,n-1}(a)-\frac{n(n-3)}{2}a^{n+1}\\
        &\geq (a^{2}+a)(a^{2n-2}+(n-1)a^{2n-3}+\frac{n(n-3)}{2}a^{2n-4})-\frac{n(n-3)}{2}a^{n+1}\\
        &=a^{2n}+na^{2n-1}+\frac{(n+1)(n-2)}{2}a^{2n-2}+\frac{n(n-3)}{2}(a^{n-4}-1)a^{n+1}\\
        &\geq a^{2n}+na^{2n-1}+\frac{(n+1)(n-2)}{2}a^{2n-2}.
    \end{align*}
Hence, the proposition also holds for $n+1$.
\end{proof}

\begin{prop}\label{lem:pnipositive}
    For any real number $a\geq1$ and integers $n\geq3$ and $i$ such that $1\leq i\leq n-1$,
    \begin{equation} \label{crazy}
        p_{n,i-1}(a)>|q_{n,i-1}(a)|+(n-3)|q_{n,i}(a)|.
    \end{equation}
Consequently, $p_{n,i-1}(a)>0$.
\end{prop}

\begin{proof}
    Again, we use mathematical induction on $n$ to prove the claim. When $n=3,4,5,6$, the 14 inequalities~\eqref{crazy} are given in Table~\ref{table2} and one can check that the proposition holds directly. 
    \begin{table} [h]
\begin{tabular} 
{ | m{0.3cm} | m{0.3cm} | m{6.5cm}|| m{0.3cm} | m{0.3cm}| m{6.5cm}| } 
  \hline
  $n$ & $i$ & & $n$ & $i$ &\\
  \hline
  3 & 1 & $30a^2+6a > 6$ & 5 & 3 & $215a^6+514a^5+346a^4+7a^3 > 10a^3 + 5a^2$ \\ 
  \hline
  3 & 2 & $11a^3+17a^2 > 11a$ & 5 & 4 & $24a^7+78a^6+82a^5+33a^4 > 10a^4 + 5a^3$ \\ 
  \hline
  4 & 1 & $180a^3+36a^2 > 6a$ & 6 & 1 & $10080a^5 + 2016a^4 > 36a$ \\
  \hline
  4 & 2 & $96a^4+138a^3+12a^2 > 11a^2 + 6a$ & 6 & 2 & $8076a^6 + 10968a^5 + 1212a^4 - 48a^3 + 12a^2 > 12a^2 + 12a$\\ 
  \hline
  4 & 3 & $17a^5+40a^4+28a^3 > 6a^3 + 11a^2$ & 6 & 3 &$2572a^7 + 6146a^6 + 4070a^5 + 170a^4 - 2a^3 > 45a^3 + 4a^2$\\
  \hline
  5 & 1 & $1260a^4+252a^3 > 12a$& 6 & 4 & $407a^8 + 1353a^7 + 1516a^6 + 617a^5 - 8a^4 > 15a^4 + 15a^3$\\
  \hline
  5 & 2 & $852a^5+1182a^4+120a^3-6a^2 > 10a^2 + 6a$& 6 & 5 & $32a^9 + 134a^8 + 200a^7 + 115a^6 + 28a^5 > 9a^5 + 5a^4$\\
  \hline
\end{tabular}
    \caption{The inequality~\eqref{crazy} for the first few values of $n$.}
    \label{table2}
\end{table}

Suppose the claim is true for some $n\geq6$. This assumption together with Proposition \ref{lem:Pnnminusone} implies $p_{n,i-1}(a)>0$  for all $a\geq1$ and $1\leq i\leq n$. We will show that
    \begin{equation}\label{rev2}
        p_{n+1,i-1}(a)>|q_{n+1,i-1}(a)|+(n-2)|q_{n+1,i}(a)|
    \end{equation}
     for all  $a\geq1$ and integers $1\leq i\leq n$ by considering three cases ($2\leq i\leq n-1$, $i=n$, and $i=1$) separately. 

    For $2\leq i\leq n-1$, by the recurrence \eqref{eq:longrecurr},
    \begin{align*}
        p_{n+1,i-1}(a)&=a(a+1)p_{n,i-2}(a)+a(3+n)p_{n,i-1}(a)+a^2q_{n,i-2}(a)+a(3-n)q_{n,i-1}(a)\\
        &\geq a(a+1)p_{n,i-2}(a)+a(3+n)p_{n,i-1}(a)-a^2|q_{n,i-2}(a)|-a(n-3)|q_{n,i-1}(a)|.
    \end{align*}
From the induction hypothesis, we also have
    \begin{equation*}
        a(a+1)p_{n,i-2}(a)>a(a+1)|q_{n,i-2}(a)|+a(a+1)(n-3)|q_{n,i-1}(a)|
    \end{equation*}
and
    \begin{equation*}
        a(3+n)p_{n,i-1}(a)>a(3+n)|q_{n,i-1}(a)|+a(3+n)(n-3)|q_{n,i}(a)|.
    \end{equation*}
If we combine the last three inequalities and simplify, we have
    \begin{align*}
        p_{n+1,i-1}(a)&> a|q_{n,i-2}(a)|+(a^2(n-3)+a(3+n))|q_{n,i-1}(a)|+a(n+3)(n-3)|q_{n,i}(a)|\\
        &> a|q_{n,i-2}(a)|+((n-3)+a(n-2))|q_{n,i-1}(a)|+(n-2)(n-3)|q_{n,i}(a)|\\
        &= [a|q_{n,i-2}(a)|+(n-3)|q_{n,i-1}(a)|]+(n-2)[a|q_{n,i-1}(a)|+(n-3)|q_{n,i}(a)|]\\
        &= [|aq_{n,i-2}(a)|+|(3-n)q_{n,i-1}(a)|]+(n-2)[|aq_{n,i-1}(a)|+|(3-n)q_{n,i}(a)|]\\
        &\geq |aq_{n,i-2}(a)+(3-n)q_{n,i-1}(a)|+(n-2)|aq_{n,i-1}(a)+(3-n)q_{n,i}(a)|\\
        &=|q_{n+1,i-1}(a)|+(n-2)|q_{n+1,i}(a)|
    \end{align*}
where in the last equality, we use~\eqref{eq:recurrdegQ}.

    For $i=n$, we need to show the following inequality:
    \begin{equation} \label{goal2}
        p_{n+1,n-1}(a)>|q_{n+1,n-1}(a)|+(n-2)|q_{n+1,n}(a)|.
    \end{equation}
    Again, by the recurrence \eqref{eq:longrecurr},
    \begin{align*}
        p_{n+1,n-1}(a)&=a(a+1)p_{n,n-2}(a)+a(3+n)p_{n,n-1}(a)+a^2q_{n,n-2}(a)+a(3-n)q_{n,n-1}(a)\\
        &\geq a(a+1)p_{n,n-2}(a)+a(3+n)p_{n,n-1}(a)-a^2|q_{n,n-2}(a)|-a(n-3)|q_{n,n-1}(a)|.
    \end{align*}
    From the induction hypothesis,
    \begin{equation*}
        a(a+1)p_{n,n-2}(a)>a(a+1)|q_{n,n-2}(a)|+a(a+1)(n-3)|q_{n,n-1}(a)|.
    \end{equation*}
    If we combine the above two inequalities and use~\eqref{eq:recurrdegQ}, then we get
    \begin{align*}
        p_{n+1,n-1}(a)&>a|q_{n,n-2}(a)|+a^{2}(n-3)|q_{n,n-1}(a)|+a(3+n)p_{n,n-1}(a)\\
        &\geq a|q_{n,n-2}(a)|+(n-3)|q_{n,n-1}(a)|+a(3+n)p_{n,n-1}(a)\\
        &=|aq_{n,n-2}(a)|+|(3-n)q_{n,n-1}(a)|+a(3+n)p_{n,n-1}(a)\\
        &\geq|aq_{n,n-2}(a)+(3-n)q_{n,n-1}(a)|+a(3+n)p_{n,n-1}(a)\\
        &=|q_{n+1,n-1}(a)|+a(3+n)p_{n,n-1}(a).
    \end{align*}
    Thus, it suffices to show that
    \begin{equation*}
        a(3+n)p_{n,n-1}(a)\geq(n-2)|q_{n+1,n}(a)|.
    \end{equation*}
    This can be easily verified by using \eqref{formula:qnplusoneq} and \eqref{formula:Pnnminusone} as follows:
    \begin{align*}
        a(3+n)p_{n,n-1}(a)&\geq a(3+n)(a^{2n-2}+(n-1)a^{2n-3}+\frac{n(n-3)}{2}a^{2n-4})\\
        &\geq a(3+n)(a^{n-1}+(n-1)a^{n-1}+\frac{n(n-3)}{2}a^{n-1})\\
        &=(3+n)\cdot\frac{n^2-n}{2}a^{n}\\
        &\geq(3+n)\cdot\frac{n^2-5n-6}{2}a^{n}\\
        &=(3+n)\cdot\frac{(n-6)(n+1)}{2}a^{n}\\
        &\geq(n-2)\cdot\bigg|\frac{(6-n)(n+1)}{2}a^{n}\bigg|\\
        &=(n-2)|q_{n+1,n}(a)|.
    \end{align*}
    
    For $i=1$, we need to show the following inequality:
    \begin{equation} \label{goal3}
        p_{n+1,0}(a)>|q_{n+1,0}(a)|+(n-2)|q_{n+1,1}(a)|.
    \end{equation}
    However, since $q_{n+1,0}(a)=0$ and $\displaystyle q_{n+1,1}(a)=6a\cdot(-1)^{n-3}\cdot(n-3)!$ for $n\geq3$ ,~\eqref{goal3} is equivalent to 
    \begin{equation*}
        p_{n+1,0}(a)>6a\cdot(n-2)!.
    \end{equation*}
This last inequality holds since $p_{n+1,0}(a)=a^{n-2}\frac{(n+3)!}{5!}(30a^2+6a)$ for $n\geq2$, and thus
    \begin{equation*}
        p_{n+1,0}(a)>\frac{(n+3)!}{5!}\cdot6a=\frac{(n+3)!}{5!(n-2)!}\cdot6a(n-2)!=\binom{n+3}{5}\cdot6a(n-2)!\geq 6a(n-2)!.
    \end{equation*}
Hence, the proposition also holds for $n+1$.
\end{proof}

\begin{proof}[Proof of Lemma~\ref{lem:longlemma}]

Using the Proposition~\ref{lem:Pnnminusone} and Proposition~\ref{lem:pnipositive}, we can bound the function $P_{n}(a,k)$ from the above as follows. For any integer $n\geq3$,

\begin{equation}
\begin{aligned}\label{ineq:Pnnegative}
    P_{n}(a,k)=a^{n}k^{n}-\sum_{i=0}^{n-1}p_{n,i}(a)k^{i}&<a^{n}k^{n}-(a^{2n-2}+(n-1)a^{2n-3}+\frac{n(n-3)}{2}a^{2n-4})k^{n-1}\\
    &=[k-(a^{n-2}+(n-1)a^{n-3}+\frac{n(n-3)}{2}a^{n-4})]a^{n}k^{n-1}.
\end{aligned}
\end{equation}
In particular, since $a\geq1$, we have $P_{k+1}(a,k)<0$ for $k\geq2$. On the other hand, $Q_{k+1}(a,k)>0$, since $Q_{n}(a,k) = \prod_{i=0}^{n-1}(ak+3-i)$. Therefore, for $k\geq2$, ~\eqref{ineq:goal3} holds for $n=k+1$ and, by Proposition~\ref{prop:equivalent}, we conclude that~\eqref{ineq:longlemma}  is true for $k\geq2$. The remaining cases when $k=0,1$  in~\eqref{ineq:longlemma} can be checked directly by observing that the values $\frac{P_{2}(a,0)}{Q_{2}(a,0)}$ and  $\frac{P_{2}(a,1)}{Q_{2}(a,1)}$ are negative (these are because $P_{2}(a,0)=P_{2}(a,1)=-6a<0$) and using Proposition~\ref{prop:equivalent}.
\end{proof}


\section{Proof of Theorem $1.1(b)$} \label{exceptions}

Suppose that $a \in \mathbb{Z}_{\geq 1}$. By combining Propositions \ref{prop:firstinequality} and \ref{prop:secondinequality}, we can conclude that for all $m\in\Zgtwo \setminus\{3,2a+4, 4a+5\}$ (except $(a,m)=(1,12)$), the sequence $\{\f{r}\}_{r=0}^{m}$ attains its unique maximum value at $r=r_{a}=\big\lfloor\frac{am-(a-1)}{2a+1}\big\rfloor+1$. To complete the proof of Theorem~\ref{1.1}~\ref{1.1.(b)}, we need to show that for $m\in\{3,2a+4, 4a+5\}$ or $(a,m)=(1,12)$, the sequence attains its unique maximum value at $r=r_{a}-1=\big\lfloor\frac{am-(a-1)}{2a+1}\big\rfloor$. Since we already know that the sequence is unimodal by Section~\ref{sec:log-concav}, it is enough to show that for $a \in \mathbb{Z}_{\geq 1}$ and $m\in\{3,2a+4,4a+5\}$ or when $(a,m)=(1,12)$,
\begin{equation}\label{5.1}
    \f{r_{a}-2}<\f{r_{a}-1}
\end{equation}
and
\begin{equation}\label{5.2}
    \f{r_{a}-1}>\f{r_{a}}
\end{equation}
hold.

We first check the case $(a,m)=(1,12)$. In this case, $r_{a}=\lfloor\frac{1\cdot12-(1-1)}{2+1}\big\rfloor+1=5$. One can see that $\f{r_{a}-2}=\f{3}=\frac{1}{2^3}\sum_{i=0}^{3}\binom{12}{i}1^i=\frac{299}{8}$, 
$\f{r_{a}-1}=\f{4}=\frac{1}{2^4}\sum_{i=0}^{4}\binom{12}{i}1^i=\frac{794}{16}$, 
$\f{r_{a}}=\f{5}=\frac{1}{2^5}\sum_{i=0}^{5}\binom{12}{i}1^i=\frac{1586}{32}$. Since $\frac{299}{8}<\frac{794}{16}$ and $\frac{794}{16}>\frac{1586}{32}$, this proves the case $(a,m)=(1,12)$.

Next, we prove the inequalites \eqref{5.1} and \eqref{5.2} for the case $m = 3$ and $a \in \mathbb{Z}_{\geq 1}$. In this case, $r_{a}=\lfloor\frac{a\cdot3-(a-1)}{2a+1}\big\rfloor+1=\lfloor\frac{2a+1}{2a+1}\big\rfloor+1=2$. Then, $\f{r_{a}-2}=\f{0}=1$, 
$\f{r_{a}-1}=\f{1}=\frac{1}{(a+1)^1}\sum_{i=0}^{1}\binom{3}{i}a^i=\frac{3a+1}{a+1}$, and
$\f{r_{a}}=\f{2}=\frac{1}{(a+1)^2}\sum_{i=0}^{2}\binom{3}{i}a^i=\frac{3a^2+3a+1}{(a+1)^2}$. Again, it is not hard to see that $1<\frac{3a+1}{a+1}$ and $\frac{3a+1}{a+1}>\frac{3a^2+3a+1}{(a+1)^2}$ hold for any $a>0$. This proves the case $m = 3$ and $a \in \mathbb{Z}_{\geq 1}$.

The proofs of the remaining two cases, $m = 2a+4$ and $m = 4a+5$, are less straightforward compared to the previous two cases but are not difficult. When $m = 2a+4$, $r_{a}=\lfloor\frac{a\cdot(2a+4)-(a-1)}{2a+1}\big\rfloor+1=\lfloor\frac{2a^2+3a+1}{2a+1}\big\rfloor+1=a+2$. In this case,~\eqref{5.1} and~\eqref{5.2}, by~\eqref{eq:prop1_eq} and~\eqref{ineq:equivalent}, reduce to
\begin{equation}\label{equival1 for 2a+4}
\f{r_{a}-2}<\f{r_{a}-1} \iff \sum_{i=0}^{a}\binom{2a+4}{i}a^i < \binom{2a+4}{a+1}a^{a}.
\end{equation}
and    
\begin{equation}\label{equival2 for 2a+4}
    \f{r_{a}-1}>\f{r_{a}} \iff \sum_{i=0}^{a+1}\binom{2a+4}{i}a^i > \binom{2a+4}{a+2}a^{a+1}.
\end{equation}
If $a = 1$, then~\eqref{equival1 for 2a+4} holds as $\binom{6}{0} + \binom{6}{1} < \binom{6}{3}$. When $a\geq2$,
\begin{align*}
    \sum_{i=0}^{a}\binom{2a+4}{i}a^i<a^{a}\cdot\sum_{i=0}^{a}\binom{2a+4}{a}a^{i-a}<\Bigg[\sum_{i=0}^{\infty}a^{-i}\Bigg]\binom{2a+4}{a}a^{a}&=\frac{a}{a-1}\binom{2a+4}{a}a^{a}\\
    &\leq\frac{a+4}{a+1}\binom{2a+4}{a}a^{a}\\
    &=\binom{2a+4}{a+1}a^{a}
\end{align*}
Thus, \eqref{equival1 for 2a+4} holds for all $a \in \mathbb{Z}_{\geq 1}$. On the other hand, since $a-1\geq0$,
\begin{align*}
    \sum_{i=0}^{a+1}\binom{2a+4}{i}a^i    &\geq \sum_{i=a-1}^{a+1}\binom{2a+4}{i}a^i\\
    &=\Bigg[\frac{(a+2)(a+1)a}{(a+3)(a+4)(a+5)}\cdot\frac{1}{a^2}+\frac{(a+2)(a+1)}{(a+3)(a+4)}\cdot\frac{1}{a}+\frac{a+2}{a+3}\Bigg]\cdot\binom{2a+4}{a+2}a^{a+1}\\
   &=\frac{a^4+12a^3+47a^2+60a+12}{a^4+12a^3+47a^2+60a}\cdot\binom{2a+4}{a+2}a^{a+1}\\
    &>\binom{2a+4}{a+2}a^{a+1}
\end{align*}
and therefore \eqref{equival2 for 2a+4} holds.
    
Similarly, when $m = 4a+5$, $r_{a}=\lfloor\frac{a\cdot(4a+5)-(a-1)}{2a+1}\big\rfloor+1=\lfloor\frac{4a^2+4a+1}{2a+1}\big\rfloor+1=2a+2$. In this case, \eqref{5.1} and \eqref{5.2}, by~\eqref{eq:prop1_eq} and~\eqref{ineq:equivalent}, reduce to
\begin{equation}\label{equival1 for 4a+5}
    \f{r_{a}-2}<\f{r_{a}-1} \iff \sum_{i=0}^{2a}\binom{4a+5}{i}a^i < \binom{4a+5}{2a+1}a^{2a}
\end{equation}
and    
\begin{equation}\label{equival2 for 4a+5}
    \f{r_{a}-1}>\f{r_{a}} \iff \sum_{i=0}^{2a+1}\binom{4a+5}{i}a^i > \binom{4a+5}{2a+2}a^{2a+1}.
\end{equation}
Showing \eqref{equival1 for 4a+5} holds for $a=1$ and $2$ is again straightforward. When $a\geq3$,
\begin{align*}
    \sum_{i=0}^{2a}\binom{4a+5}{i}a^i< a^{2a}\cdot\sum_{i=0}^{2a}\binom{4a+5}{2a}a^{i-2a}<\Bigg[\sum_{i=0}^{\infty}a^{-i}\Bigg]\binom{4a+5}{2a}a^{2a}
    &=\frac{a}{a-1}\binom{4a+5}{2a}a^{2a}\\
    &<\frac{2a+5}{2a+1}\binom{4a+5}{2a}a^{2a}\\
    &=\binom{4a+5}{2a+1}a^{2a}.
\end{align*}
Thus, \eqref{equival1 for 4a+5} holds for all integers $a \geq 1$. The inequality \eqref{equival2 for 4a+5} can also be checked using a similar idea as the proof of \eqref{equival2 for 2a+4}. Namely, since $2a-2\geq0$,
\begin{align*}
    &\sum_{i=0}^{2a+1}\binom{4a+5}{i}a^i\geq \sum_{i=2a-2}^{2a+1}\binom{4a+5}{i}a^i\\
    &
    \begin{aligned}
    =\Bigg[\frac{(2a+2)(2a+1)2a(2a-1)}{(2a+3)(2a+4)(2a+5)(2a+6)}\cdot\frac{1}{a^3}&+\frac{(2a+2)(2a+1)2a}{(2a+3)(2a+4)(2a+5)}\cdot\frac{1}{a^2}\\
    &+\frac{(2a+2)(2a+1)}{(2a+3)(2a+4)}\cdot\frac{1}{a}+\frac{2a+2}{2a+3}\Bigg]\cdot\binom{4a+5}{2a+2}a^{2a+1}
    \end{aligned}
    \\
    &=\frac{4a^6+38a^5+136a^4+221a^3+140a^2+20a-1}{4a^6+36a^5+119a^4+171a^3+90a^2}\cdot\binom{4a+5}{2a+2}a^{2a+1}\\
    &>\binom{4a+5}{2a+2}a^{2a+1}.
\end{align*}
Therefore, \eqref{equival2 for 4a+5} is verified and Theorem~\ref{1.1}~\ref{1.1.(b)} is proved completely.


\section{Maximal value of $\{\f{r}\}_{r=0}^{m}$} \label{sec:maxvalue}
In this section, we prove Theorem~\ref{1.1}~\ref{1.1.(c)}. We first establish a lower and an upper bound for the maximal value $f_{m,a}(r_{a})$.

\begin{prop} \label{prop:bounds}
    For  $a \in \Zgt$ and an integer $m>6(a+1)$, we have
    \begin{equation*}
        \bigg(1-\frac{(1+2a)r_{a}-am+a+1}{(1+a)(1+r_{a})}\bigg)\bigg(\frac{a}{1+a}\bigg)^{r_{a}-1}\binom{m}{r_{a}} < f_{m,a}(r_{a}) < \bigg(\frac{a}{1+a}\bigg)^{r_{a}-1}\binom{m}{r_{a}},
    \end{equation*}
where $r_{a}$ is given by~\eqref{radef}.

\end{prop}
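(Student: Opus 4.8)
The plan is to observe that, once the common factor $\bigl(\tfrac{a}{1+a}\bigr)^{r_{a}-1}\binom{m}{r_{a}}$ is divided out, each of the two asserted inequalities becomes exactly one of the two peak inequalities already proved in Section~\ref{sec:peak}. Abbreviate $t_{i}:=\binom{m}{i}a^{i}$, so that $f_{m,a}(r_{a})=(1+a)^{-r_{a}}\sum_{i=0}^{r_{a}}t_{i}$ and $t_{r_{a}}=a^{r_{a}}\binom{m}{r_{a}}$. Since $\bigl(\tfrac{a}{1+a}\bigr)^{r_{a}-1}\binom{m}{r_{a}}=a^{r_{a}-1}(1+a)^{-(r_{a}-1)}\binom{m}{r_{a}}$ is positive, dividing the double inequality by this quantity preserves both directions and reduces everything to estimating the single ratio $\tfrac{a}{1+a}\,t_{r_{a}}^{-1}\sum_{i=0}^{r_{a}}t_{i}$, which isolates the combinatorial content.

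First I would dispatch the upper bound. After the division above, $f_{m,a}(r_{a})<\bigl(\tfrac{a}{1+a}\bigr)^{r_{a}-1}\binom{m}{r_{a}}$ is equivalent to $\sum_{i=0}^{r_{a}}t_{i}<\tfrac{1+a}{a}\,t_{r_{a}}$, and subtracting $t_{r_{a}}=a\cdot a^{r_{a}-1}\binom{m}{r_{a}}$ from both sides turns this into $\sum_{i=0}^{r_{a}-1}\binom{m}{i}a^{i}<\binom{m}{r_{a}}a^{r_{a}-1}$, which is precisely the right-hand condition in~\eqref{eq:prop1_eq}. Because $m>6(a+1)=6a+6$, the integer $m$ avoids all of $3,2a+4,4a+5$ (and $6a+6$ in the case $a=1$), so \Cref{prop:firstinequality} applies verbatim and yields the strict upper bound.

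For the lower bound the only genuine computation is the algebraic identity $\tfrac{1+a}{a}(1-c)=\tfrac{m-r_{a}}{1+r_{a}}$, where $c:=\tfrac{(1+2a)r_{a}-am+a+1}{(1+a)(1+r_{a})}$ is the factor appearing in the statement. This follows by expanding the numerator $(1+a)(1+r_{a})-(1+2a)r_{a}+am-a-1$, which collapses to $a(m-r_{a})$. Granting the identity, dividing the lower bound by the common factor shows it is equivalent to $t_{r_{a}}^{-1}\sum_{i=0}^{r_{a}}t_{i}>\tfrac{m-r_{a}}{1+r_{a}}$, that is, to $\sum_{i=0}^{r_{a}}\binom{m}{i}a^{i}>\tfrac{m-r_{a}}{1+r_{a}}\binom{m}{r_{a}}a^{r_{a}}=\binom{m}{r_{a}+1}a^{r_{a}}$, which is exactly~\eqref{ineq:equivalent}. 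This holds by \Cref{prop:secondinequality} for every $m\in\Zgtwo$, in particular for $m>6(a+1)$.

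The step I expect to be the crux is verifying the identity $\tfrac{1+a}{a}(1-c)=\tfrac{m-r_{a}}{1+r_{a}}$: it is what makes the opaque correction factor $1-c$ coincide exactly with $\tfrac{a}{1+a}\,\binom{m}{r_{a}+1}/\binom{m}{r_{a}}$, so that the lower bound is nothing more than a rescaling of~\eqref{ineq:equivalent}. Everything else is routine bookkeeping; in particular, no new estimate on the partial sums is needed beyond the two propositions of Section~\ref{sec:peak}, and the hypothesis $m>6(a+1)$ enters only to guarantee that the exceptional values excluded in \Cref{prop:firstinequality} do not occur.
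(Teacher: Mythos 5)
Your proposal is correct and follows essentially the same route as the paper: both reduce the upper bound to $f_{m,a}(r_a-1)<f_{m,a}(r_a)$ (Proposition~\ref{prop:firstinequality}) and the lower bound to $f_{m,a}(r_a)>f_{m,a}(r_a+1)$ (Proposition~\ref{prop:secondinequality}) by the same algebraic manipulations, with the identity $\tfrac{1+a}{a}(1-c)=\tfrac{m-r_a}{1+r_a}$ appearing in the paper as the step $\binom{m}{r_a+1}=\tfrac{m-r_a}{r_a+1}\binom{m}{r_a}$. The only cosmetic difference is that the paper invokes Theorem~\ref{1.1}~\ref{1.1.(b)} wholesale while you cite the two propositions directly; the hypothesis $m>6(a+1)$ plays the same role in both.
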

\begin{proof}
 Let $a \geq 1$. Since, by Theorem~\ref{1.1}~\ref{1.1.(b)} for $m> 6a+6$, $f_{m,a}(r)$ attains its maximum value at $r=r_{a}$, we have
    \begin{equation*}
        f_{m,a}(r_{a}-1) < f_{m,a}(r_{a}) \text{ and } f_{m,a}(r_{a}+1) < f_{m,a}(r_{a}).
    \end{equation*}
    By the definition of $f_{m,a}(r)$ and \eqref{eq:prop1_eq}, we have
    \begin{align*}
        f_{m,a}(r_{a}-1) < f_{m,a}(r_{a})&\iff \sum_{i=0}^{r_{a}-1}\binom{m}{i}a^{i} < \binom{m}{r_{a}}a^{r_{a}-1}\\
        &\iff \sum_{i=0}^{r_{a}}\binom{m}{i}a^{i} < (1+a)a^{r_{a}-1}\binom{m}{r_{a}}\\
        &\iff f_{m,r}(r_{a})=\frac{1}{(1+a)^{r_{a}}}\sum_{i=0}^{r_{a}}\binom{m}{i}a^{i} < \bigg(\frac{a}{1+a}\bigg)^{r_{a}-1}\binom{m}{r_{a}}.
    \end{align*}

    Similarly, using the fact that $\binom{m}{r_{a}+1}=\frac{m-r_{a}}{r_{a}+1}\binom{m}{r_{a}}$ and \eqref{ineq:equivalent}, we have
   \begin{align*}
        f_{m,a}(r_{a}+1) < f_{m,a}(r_{a})&\iff \binom{m}{r_{a}+1}a^{r_{a}} < \sum_{i=0}^{r_{a}}\binom{m}{i}a^{i}\\
        &\iff \bigg(\frac{a(m-r_{a})}{(1+a)(1+r_{a})}\bigg)\bigg(\frac{a}{1+a}\bigg)^{r_{a}-1}\binom{m}{r_{a}} < \frac{1}{(1+a)^{r_{a}}}\sum_{i=0}^{r_{a}}\binom{m}{i}a^{i}\\
        &\iff \bigg(1-\frac{(1+2a)r_{a}-am+a+1}{(1+a)(1+r_{a})}\bigg)\bigg(\frac{a}{1+a}\bigg)^{r_{a}-1}\binom{m}{r_{a}} < f_{m,r}(r_{a}).
    \end{align*}
\end{proof}

\begin{proof}[Proof of Theorem~\ref{1.1}~\ref{1.1.(c)}] 
The definition~\eqref{radef} of $r_{a}$ implies that $2<(1+2a)r_{a}-am+a+1\leq 2a+3$. Thus, we conclude \begin{equation*} \lim_{m\to\infty} \frac{r_a}{m}= \frac{a}{1+2a}, \hspace{1cm} \lim_{m\to\infty}\bigg(1-\frac{(1+2a)r_{a}-am+a+1}{(1+a)(1+r_{a})}\bigg)=1,
\end{equation*}
and, by Proposition~\ref{prop:bounds}, as $m\rightarrow\infty$,\[\f{r_a} \sim \bigg(\frac{a}{1+a}\bigg)^{r_{a}-1}\binom{m}{r_{a}}.\]

Using the Stirling approximation: $n!\sim \sqrt {2\pi n}\left({\frac {n}{e}}\right)^{n}$ as $n \to \infty$, we have
\begin{align*}
    &\displaystyle\bigg(\frac{a}{1+a}\bigg)^{r_{a}-1}\binom{m}{r_{a}}\\
    =&\frac{1+a}{a}\cdot\frac{m!}{r_{a}!(m-r_{a})!}\cdot\Big(\frac{a}{1+a}\Big)^{r_{a}}\\
    =&\frac{1+a}{a}\cdot\bigg(\frac{1+2a}{1+a}\bigg)^{m}\cdot\frac{m!}{(\frac{m}{e})^{m}}\cdot\frac{(\frac{r_{a}}{e})^{r_{a}}}{r_{a}!}\cdot\frac{(\frac{m-r_{a}}{e})^{m-r_{a}}}{(m-r_{a})!}\cdot \bigg(\frac{\frac{am}{2a+1}}{r_{a}}\bigg)^{r_{a}}\cdot\bigg(\frac{\frac{(a+1)m}{2a+1}}{m-r_{a}}\bigg)^{m-r_{a}}\\
    \sim&\frac{1+a}{a}\cdot\bigg(\frac{1+2a}{1+a}\bigg)^{m}\cdot\sqrt{2\pi m}\cdot\frac{1}{\sqrt{2\pi r_{a}}}\cdot\frac{1}{\sqrt{2\pi (m-r_{a})}}\cdot \bigg(\frac{\frac{am}{2a+1}}{r_{a}}\bigg)^{r_{a}}\cdot\bigg(\frac{\frac{(a+1)m}{2a+1}}{m-r_{a}}\bigg)^{m-r_{a}}\\
    =&\frac{1+a}{a}\cdot\bigg(\frac{1+2a}{1+a}\bigg)^{m}\cdot\frac{1}{\sqrt{2\pi m}}\cdot\sqrt{\frac{m}{r_{a}}}\cdot\sqrt{\frac{m}{m-r_{a}}}\cdot \bigg(\frac{\frac{am}{2a+1}}{r_{a}}\bigg)^{r_{a}}\cdot\bigg(\frac{\frac{(a+1)m}{2a+1}}{m-r_{a}}\bigg)^{m-r_{a}}\\
    \sim&\frac{1+a}{a}\cdot\bigg(\frac{1+2a}{1+a}\bigg)^{m}\cdot\frac{1}{\sqrt{2\pi m}}\cdot\sqrt{\frac{1+2a}{a}}\cdot\sqrt{\frac{1+2a}{1+a}}\cdot \bigg(\frac{\frac{am}{2a+1}}{r_{a}}\bigg)^{r_{a}}\cdot\bigg(\frac{\frac{(a+1)m}{2a+1}}{m-r_{a}}\bigg)^{m-r_{a}}\\
    =&\frac{(1+2a)^{m+1}}{a^{\frac{3}{2}}(1+a)^{m-\frac{1}{2}}\sqrt{2\pi m}}\cdot \bigg(\frac{\frac{am}{2a+1}}{r_{a}}\bigg)^{r_{a}}\cdot\bigg(\frac{\frac{(a+1)m}{2a+1}}{m-r_{a}}\bigg)^{m-r_{a}}.
\end{align*}
We now show that 

\begin{equation*}
    \bigg(\frac{\frac{am}{2a+1}}{r_{a}}\bigg)^{r_{a}}\cdot\bigg(\frac{\frac{(a+1)m}{2a+1}}{m-r_{a}}\bigg)^{m-r_{a}}\sim1.
\end{equation*}
This can be verified by considering $(2a+1)$ modulo classes modulo $(2a+1)$. As recorded in Figure \ref{fig:Implication},\\
1) if $m=(2a+1)k+3$, $k\in\Zge$, then $r_{a}=ak+2$ and \\ 
2) if $m=(2a+1)k+2n+\epsilon$, $k\in\Zge$, $n\in\{2,3,\dots,a+1\}$, $\epsilon\in\{0,1\}$, then $r_{a}=ak+n$. 

When $m=(2a+1)k+3$, $k\in\Zge$, as $k \to \infty$,
\begin{align*}
    \bigg(\frac{\frac{am}{2a+1}}{r_{a}}\bigg)^{r_{a}}\cdot\bigg(\frac{\frac{(a+1)m}{2a+1}}{m-r_{a}}\bigg)^{m-r_{a}}&=\bigg(\frac{\frac{a((2a+1)k+3)}{2a+1}}{ak+2}\bigg)^{ak+2}\cdot\bigg(\frac{\frac{(a+1)((2a+1)k+3)}{2a+1}}{(2a+1)k+3-(ak+2))}\bigg)^{(2a+1)k+3-(ak+2)}\\
    &=\bigg(\frac{(2a+1)k+3}{(2a+1)k}\bigg)^{(2a+1)k+3}\bigg(\frac{ak}{ak+2}\bigg)^{ak+2}\bigg(\frac{(a+1)k}{(a+1)k+1}\bigg)^{(a+1)k+1}\\
    &=\bigg(1+\frac{3}{(2a+1)k}\bigg)^{(2a+1)k+3}\bigg(\frac{1}{1+\frac{2}{ak}}\bigg)^{ak+2}\bigg(\frac{1}{1+\frac{1}{(a+1)k}}\bigg)^{(a+1)k+1}\\
    &\sim\bigg(1+\frac{3}{(2a+1)k}\bigg)^{(2a+1)k}\bigg(\frac{1}{1+\frac{2}{ak}}\bigg)^{ak}\bigg(\frac{1}{1+\frac{1}{(a+1)k}}\bigg)^{(a+1)k}\\
    &\sim e^{3}\cdot\frac{1}{e^{2}}\cdot\frac{1}{e^{1}}\\
    &=1.
\end{align*}

Similarly, when $m=(2a+1)k+2n+\epsilon$, $k\in\Zge$, $n\in\{2,3,\dots,a+1\}$, $\epsilon\in\{0,1\}$, as $k \to \infty$,
\begin{align*}
    &\bigg(\frac{\frac{am}{2a+1}}{r_{a}}\bigg)^{r_{a}}\cdot\bigg(\frac{\frac{(a+1)m}{2a+1}}{m-r_{a}}\bigg)^{m-r_{a}}\\
    =&\bigg(\frac{\frac{a((2a+1)k+2n+\epsilon)}{2a+1}}{ak+n}\bigg)^{ak+n}\cdot\bigg(\frac{\frac{(a+1)((2a+1)k+2n+\epsilon)}{2a+1}}{(2a+1)k+2n+\epsilon-(ak+n))}\bigg)^{(2a+1)k+2n+\epsilon-(ak+n)}\\
    =&\bigg(\frac{(2a+1)k+2n+\epsilon}{(2a+1)k}\bigg)^{(2a+1)k+2n+\epsilon}\bigg(\frac{ak}{ak+n}\bigg)^{ak+n}\bigg(\frac{(a+1)k}{(a+1)k+n+\epsilon}\bigg)^{(a+1)k+n+\epsilon}\\
    =&\bigg(1+\frac{2n+\epsilon}{(2a+1)k}\bigg)^{(2a+1)k+2n+\epsilon}\bigg(\frac{1}{1+\frac{n}{ak}}\bigg)^{ak+n}\bigg(\frac{1}{1+\frac{n+\epsilon}{(a+1)k}}\bigg)^{(a+1)k+n+\epsilon}\\
    \sim&\bigg(1+\frac{2n+\epsilon}{(2a+1)k}\bigg)^{(2a+1)k}\bigg(\frac{1}{1+\frac{n}{ak}}\bigg)^{ak}\bigg(\frac{1}{1+\frac{n+\epsilon}{(a+1)k}}\bigg)^{(a+1)k}\\
    \sim& e^{2n+\epsilon}\cdot\frac{1}{e^{n}}\cdot\frac{1}{e^{n+\epsilon}}\\
    =&1.
\end{align*}

Therefore, we have
\begin{equation*}
    \displaystyle\bigg(\frac{a}{1+a}\bigg)^{r_{a}-1}\binom{m}{r_{a}}\sim \frac{(1+2a)^{m+1}}{a^{\frac{3}{2}}(1+a)^{m-\frac{1}{2}}\sqrt{2\pi m}}.
\end{equation*}
and we can conclude that

\begin{equation*}
        \lim_{m\to \infty} f_{m,r}(r_{a})\sqrt{m}\bigg(\frac{1+a}{1+2a}\bigg)^{m}=\lim_{m\to \infty} \bigg(\frac{a}{1+a}\bigg)^{r_{a}-1}\binom{m}{r_{a}}\sqrt{m}\bigg(\frac{1+a}{1+2a}\bigg)^{m}= \frac{(1+a)^{\frac{1}{2}}(1+2a)}{a^{\frac{3}{2}}\sqrt{2\pi}}.
    \end{equation*}
The second equality of part~\ref{1.1.(c)} can be easily obtained by taking $\lim_{a\rightarrow\infty}$ to the first equality.
\end{proof}


\section{The weighted average of $\{\f{r}\}_{r=0}^{m}$} \label{sec:avgvalue}

Since the sequence $\{f_{m,a}(r)\}_{r=0}^{m}$ consists of positive terms, it is natural to consider a discrete probability distribution obtained by normalizing the sequence. More precisely, let $S_{m,a}=\sum_{r=0}^{m}f_{m,a}(r)$ and let  $p_{m,a}(r)\coloneqq f_{m,a}(r)/S_{m,a}$ for $r  \in \{0, 1, \dots, m\}$. We consider the random variable  $X_{m,a}$ whose probability mass function is $\mathrm{P}(X_{m,a}=r) = p_{m,a}(r)$, for $0 \leq r \leq m$.

Given a probability distribution, one natural question is finding its moments. In this section, we find the first moment (i.e., the average) of $X_{m,a}$.

\begin{thm} For $a\in\mathbb{R}_{>0}$ and $m,r\in\mathbb{Z}_{\geq0}$ such that $r \leq m$, let $\mathrm{P}(X_{m,a}=r) = p_{m,a}(r)$. Then
\begin{equation*}
    \mathbb{E}(X_{m,a})\sim \frac{am}{2a+1}+\frac{1}{a} \quad \text{ as } \quad a\rightarrow\infty. 
\end{equation*}
\end{thm}

\begin{proof}
First, we compute the normalizing factor.
\begin{align*}
    S_{m,a}=\sum_{r=0}^{m}f_{m,a}(r)&=\sum_{r=0}^{m}\frac{1}{(1+a)^r}\sum_{i=0}^{r}\binom{m}{i}a^{i}\\
    &=\sum_{i=0}^{m}\Bigg[\sum_{r=i}^{m}\frac{1}{(1+a)^r}\Bigg]\binom{m}{i}a^{i}\\
    &=\sum_{i=0}^{m}\frac{\frac{1}{(a+1)^{i}}\left(1-\frac{1}{(a+1)^{m+1-i}}\right)}{1-\frac{1}{a+1}}\binom{m}{i}a^{i}\\
    &=\frac{a+1}{a}\sum_{i=0}^{m}\Bigg[\frac{1}{(a+1)^{i}}-\frac{1}{(a+1)^{m+1}}\Bigg]\binom{m}{i}a^{i}\\
    &=\frac{a+1}{a}\sum_{i=0}^{m}\binom{m}{i}\bigg(\frac{a}{a+1}\bigg)^{i}-\frac{a+1}{a}\frac{1}{(a+1)^{m+1}}\sum_{i=0}^{m}\binom{m}{i}a^{i}\\
    &=\frac{a+1}{a}\bigg(1+\frac{a}{a+1}\bigg)^{m}-\frac{a+1}{a}\frac{1}{(a+1)^{m+1}}(a+1)^{m}\\
    &=\frac{a+1}{a}\bigg(\frac{2a+1}{a+1}\bigg)^{m}-\frac{1}{a}.
\end{align*}
The following identities are used in the derivations that follow. For nonnegative integer $i\leq m$,
\begin{equation*}
    \sum_{j=0}^{m}j\binom{m}{j}x^j=m(x+1)^{m-1}x,\quad\sum_{r=i}^{m}rx^r=\frac{ix^i-(i-1)x^{i+1}-(m+1)x^{m+1}+mx^{m+2}}{(1-x)^2}.
\end{equation*}
The weighted sum $\sum_{r=0}^{m}r\cdot f_{m,a}(r)$ is

\begin{align*}
    \sum_{r=0}^{m}r\cdot f_{m,a}(r)&=\sum_{r=0}^{m}\frac{r}{(1+a)^r}\sum_{i=0}^{r}\binom{m}{i}a^{i}\\
    &=\sum_{i=0}^{m}\Bigg[\sum_{r=i}^{m}\frac{r}{(1+a)^r}\Bigg]\binom{m}{i}a^{i}\\
    &=\sum_{i=0}^{m}\Bigg[\frac{i(\frac{1}{a+1})^i-(i-1)(\frac{1}{a+1})^{i+1}-(m+1)(\frac{1}{a+1})^{m+1}+m(\frac{1}{a+1})^{m+2}}{(1-\frac{1}{a+1})^2}\Bigg]\binom{m}{i}a^{i}\\
    &
    \begin{aligned}
    =\frac{(a+1)^2}{a^2}\Bigg[&\bigg(1-\frac{1}{a+1}\bigg)\sum_{i=0}^{m}i\binom{m}{i}\bigg(\frac{a}{a+1}\bigg)^i+\frac{1}{a+1}\sum_{i=0}^{m}\binom{m}{i}\bigg(\frac{a}{a+1}\bigg)^i\\
    &-(m+1)\bigg(\frac{1}{a+1}\bigg)^{m+1}\sum_{i=0}^{m}\binom{m}{i}a^i+m\bigg(\frac{1}{a+1}\bigg)^{m+2}\sum_{i=0}^{m}\binom{m}{i}a^i\Bigg]
    \end{aligned}
    \\
    &
    \begin{aligned}
    =\frac{(a+1)^2}{a^2}\Bigg[&\frac{a}{1+a}\cdot m\bigg(1+\frac{a}{a+1}\bigg)^{m-1}\frac{a}{a+1}+\frac{1}{a+1}\bigg(1+\frac{a}{a+1}\bigg)^{m}\\
    &-(m+1)\bigg(\frac{1}{a+1}\bigg)^{m+1}(a+1)^m+m\bigg(\frac{1}{a+1}\bigg)^{m+2}(a+1)^m\Bigg]
    \end{aligned}
    \\
    &=m\bigg(\frac{2a+1}{a+1}\bigg)^{m-1}+\frac{a+1}{a^2}\bigg(\frac{2a+1}{a+1}\bigg)^m-(m+1)\frac{a+1}{a^2}+m\frac{1}{a^2}\\
    &=\Bigg[\frac{a+1}{2a+1}\cdot m+\frac{a+1}{a^2}\Bigg]\Bigg(\frac{2a+1}{a+1}\Bigg)^m-\Bigg(\frac{1}{a}m+\frac{a+1}{a^2}\Bigg).
\end{align*}

Therefore, 

\begin{equation*}
    \mathbb{E}(X_{m,a})=\frac{\big[\frac{a+1}{2a+1}\cdot m+\frac{a+1}{a^2}\big]\big(\frac{2a+1}{a+1}\big)^m-\big(\frac{1}{a}m+\frac{a+1}{a^2}\big)}{\frac{a+1}{a}\big(\frac{2a+1}{a+1}\big)^{m}-\frac{1}{a}}
    \sim \frac{am}{2a+1}+\frac{1}{a}.
\end{equation*}

\end{proof}

Recall that the peak of the sequence $\{f_{m,a}(r)\}_{r=0}^{m}$ was at $r=\lfloor\frac{am-(a-1)}{2a+1}\rfloor+1$. Surprisingly, when $m$ is large enough, the average and the mode $r = r_a$ are so close since $\lim_{m \to \infty} 
\frac{r_a}{m} = \frac{a}{2a+1}$ (the distribution is not symmetric, so this does not need to be the case)!


\section{Discussion}\label{sec:discussion}

\subsection*{\textbf{1.}} We mention that the following strengthening of Lemma~\ref{lem:longlemma} is implied by the properties of the polynomials $P_n(a,k)$ and $Q_n(a,k)$ we proved, but we did not need to use it in the proofs of our results.

\begin{prop}\label{prop:propbeforeproof}
    Let  $a \in \Zgt$, $k \in \Zge$, and let $n \in \mathbb{Z}_{\geq3}$ such that  \[\frac{n-3}{a}\leq k\leq a^{n-2}+(n-1)a^{n-3}+\frac{n(n-3)}{2}a^{n-4}.\] Then
    \begin{equation} \label{secondtolast}
        \sum_{i=ak+2-(n-1)}^{ak+2}\binom{(2a+1)k+5}{i}a^i>\binom{(2a+1)k+5}{ak+3}a^{ak+2}.
    \end{equation}
  \end{prop}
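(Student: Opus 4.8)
The plan is to turn \eqref{secondtolast} into a statement about the sign of the polynomial $P_n(a,l)$, using the identity already proved in Proposition~\ref{prop:equivalent}. Since the sum in \eqref{secondtolast} runs over $n$ consecutive indices ending at $al+2$, its lowest index is $al+3-n$; the hypothesis $l\geq (n-3)/a$, that is $al\geq n-3$, guarantees $al+3-n\geq 0$ and, combined with $n\geq 3$, confines $n$ to the range $3\leq n\leq al+3$. I would first split off the extreme case $n=al+3$: there $al+3-n=0$, the sum becomes the full sum $\sum_{i=0}^{al+2}\binom{(2a+1)l+5}{i}a^i$, and \eqref{secondtolast} is exactly \eqref{ineq:longlemma}, hence already established by Lemma~\ref{lem:longlemma} (this case also absorbs $l=0$, which forces $n=3=al+3$).

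For the main range $3\leq n\leq al+2$ I would apply the identity \eqref{eq:equivalent} of Proposition~\ref{prop:equivalent}, which rewrites the difference between the right-hand side and the partial sum in \eqref{secondtolast} as
\[
\frac{P_{n}(a,l)}{Q_{n}(a,l)}\binom{(2a+1)l+5}{al+3-n}a^{al+2-n}.
\]
Thus \eqref{secondtolast} is equivalent to this quantity being negative. Note that here $al\geq n-2\geq 1$ forces $l\geq 1$. Next I would check that every explicit factor is positive: since $Q_{n}(a,l)=\prod_{i=0}^{n-1}(al+3-i)$ has smallest factor $al+4-n\geq 1$ (again by $al\geq n-3$), we get $Q_{n}(a,l)>0$; the binomial coefficient is positive because $0\leq al+3-n\leq (2a+1)l+5$; and $a^{al+2-n}>0$ as $a>0$. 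Consequently the sign is controlled by $P_{n}(a,l)$, and \eqref{secondtolast} reduces to the single inequality $P_{n}(a,l)<0$.

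Finally I would invoke the upper bound on $P_n$ obtained in the proof of Lemma~\ref{lem:longlemma} by combining Proposition~\ref{lem:Pnnminusone} and Proposition~\ref{lem:pnipositive}, namely \eqref{ineq:Pnnegative}:
\[
P_{n}(a,l)<\Big[\,l-\big(a^{n-2}+(n-1)a^{n-3}+\tfrac{n(n-3)}{2}a^{n-4}\big)\Big]a^{n}l^{n-1}.
\]
The upper hypothesis on $l$ says precisely that the bracket is $\leq 0$, while $a^{n}l^{n-1}>0$ because $l\geq 1$; hence the right-hand side is $\leq 0$ and $P_{n}(a,l)<0$, completing this case.

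In truth I expect no serious obstacle: once the machinery of Section~\ref{sec:long prop} is available, the proof is essentially immediate, and the only thing that requires attention is the bookkeeping on the range of $n$. The two inequalities in the hypothesis play clearly separated roles — the lower bound $l\geq(n-3)/a$ is exactly what makes $Q_{n}(a,l)$ (and the starting index of the sum) nonnegative, while the upper bound on $l$ is exactly what forces $P_{n}(a,l)$ negative through \eqref{ineq:Pnnegative}. The one subtlety worth isolating cleanly is the boundary value $n=al+3$, which is not covered by Proposition~\ref{prop:equivalent} (valid only for $n\leq al+2$) and must instead be recognized as Lemma~\ref{lem:longlemma} itself.
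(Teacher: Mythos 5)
Your proof is correct and follows essentially the same route as the paper's: apply the identity \eqref{eq:equivalent} to reduce \eqref{secondtolast} to the sign of $P_n(a,l)/Q_n(a,l)$, then use \eqref{ineq:Pnnegative} for $P_n(a,l)<0$ and the lower bound on $l$ for $Q_n(a,l)>0$. Your separate treatment of the boundary case $n=al+3$ (where Proposition~\ref{prop:equivalent} does not directly apply and the claim is just Lemma~\ref{lem:longlemma}) is a careful touch the paper glosses over, but it does not change the argument.
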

\begin{proof}
   As we checked in \eqref{eq:equivalent},
    \begin{equation} \label{last}
        \binom{(2a+1)k+5}{ak+3}a^{ak+2}-\displaystyle\sum_{i=ak+2-(n-1)}^{ak+2}\binom{(2a+1)k+5}{i}a^i=\frac{P_{n}(a,k)}{Q_{n}(a,k)}\binom{(2a+1)k+5}{ak+3-n}a^{ak+2-n}.
    \end{equation}
From \eqref{ineq:Pnnegative}, one can check that $P_{n}(a,k)$ is negative when $k\leq a^{n-2}+(n-1)a^{n-3}+\frac{n(n-3)}{2}a^{n-4}$. Also, by the definition of $Q_{n}(a,k)$, we have $Q_{n}(a,k) >0$ when $\frac{n-3}{a}\leq k$. Thus, the ratio $\frac{P_{n}(a,k)}{Q_{n}(a,k)}$ is negative when $\frac{n-3}{a}\leq k\leq a^{n-2}+(n-1)a^{n-3}+\frac{n(n-3)}{2}a^{n-4}$ and thus the right-hand side of~\eqref{last} is negative. This implies~\eqref{secondtolast}.
\end{proof}

\subsection*{\textbf{2.}} 
Let $q$ be an indeterminate. For two polynomials $X(q), Y(q)\in\mathbb{R}_{\geq0}[q]$, we say $X(q)\leq Y(q)$ (or $Y(q) \geq X(q)$) if and only if $Y(q)-X(q)\in\mathbb{R}_{\geq0}[q]$. A sequence of polynomials with nonnegative coefficients $\{X_{i}(q)\}_{i\geq0}$ is $q$\textit{-log concave} if $X_{k}^{2}(q)\geq X_{k-1}(q)X_{k+1}(q)$ for all $k\geq1$, and it is \textit{strongly }$q$\textit{-log concave} if $X_{k}(q)X_{l}(q)\geq X_{k-1}(q)X_{l+1}(q)$ for all $0<k\leq l$. Clearly, strong $q$-log concavity implies $q$-log concavity, but unlike the ordinary case, these two notions are not equivalent. Natural $q$-analogues of several unimodal sequences, including $\{n\}_{n\in \mathbb{N}}$, $\{\binom{n}{i}\}_{i=0}^{n}$, are known to be strongly $q$-log concave (we refer to \cite{butler1990q} and \cite{sagan1992inductive} for a proof of these facts). Using a similar argument as in Lemma \ref{Lemma2.1}, one can show that for $a\in \RRgt$, the sequence $\{\frac{1}{(1+a)^r}\sum_{i=0}^{r}\binom{m}{i}_{q}a^i\}_{r=0}^{m}$ is strongly $q$-log concave, where $\binom{m}{i}_{q}$ are the Gaussian binomial coefficients. Furthermore, for $a\in \Zgt$, one can show that $\{\frac{1}{(1+a)^r}\sum_{i=0}^{r}\binom{m}{i}_{q}[a]_{q}^i\}_{r=0}^{m}$ has the same property, where $[a]_{q}\coloneqq\sum_{i=0}^{a-1}q^i$ is the $q$-integer. One natural question is what happens to these polynomials at $r=r_{a}$? Since this paper concerns the case when $q=1$, the main theorem of this paper implies that the sum of coefficients of the polynomials attains its maximum at $r=r_{a}$. Is there anything more interesting to be said about these two polynomial sequences?

\subsection*{\textbf{3.}} 
We proved that $\{\f{r}\}_{r=0}^{m}$ is a unimodal sequence for $a \in \mathbb{R}_{>0}$, but we described the peak position only for $a \in \mathbb{Z}_{\geq 1}$. Our computational experiments show that $\f{r}$ attains its maximal value close to $r=r_{a}=\lfloor\frac{am-(a-1)}{2a+1}\rfloor+1$ for $a \in \mathbb{R}_{>0}$ for almost all values of $m$. However, we were not able to formulate a conjecture for the location of the peak when $a \in \mathbb{R}_{>0}$. Also, we note that some parts of our proofs rely on $a$ being an integer, so even if one formulates a conjecture for the peak location for noninteger values of $a$, different techniques would be needed in the proof.

\subsection*{\textbf{4.}} In Section~\ref{sec:avgvalue}, we considered the probability distribution given by $p_{m,a}$. It might be interesting to see what the behavior of this distribution is for large values of $m$.


\section*{Acknowledgements} \label{sec:acknowledgments}

We want to thank Brian Fralix, Ryann Cartor, and Felice Manganiello for helpful discussions related to general questions that came up as we worked on this project. We also thank an anonymous reviewer for carefully reading the initial version of the paper and giving many insightful suggestions.

\bibliography{bibliography}{}
\bibliographystyle{abbrv}

\end{document}